\definecolor{lila}{RGB}{150,51,255}
\definecolor{verdesi}{RGB}{0,153,0}
\def\th@newremark{\th@remark\thm@headfont{\bfseries}}  
\theoremstyle{newremark}                               
\DeclareMathOperator{\inverseGauss}{IG}
\def\Z{\mathbb{Z}}
\def\N{\mathbb{N}}
\def\P{\mathbb{P}}
\def\R{\mathbb{R}}
\def\E{\mathbb{E}}
\def\ind{ \mathbbm{1} }
\newtheorem{theorem}{Theorem}[section]
\newtheorem{corollary}[theorem]{Corollary}
\newtheorem{definition}[theorem]{Definition}
\newtheorem{lemma}[theorem]{Lemma}
\newtheorem{proposition}[theorem]{Proposition}
\newtheorem{remark}[theorem]{Remark}
\newcommand{\VEC}[1]{\mathbf{#1}}
\definecolor{burgundy}{rgb}{0.5, 0.0, 0.13}
\definecolor{linkblue}{RGB}{0,20,128}
\definecolor{linkred}{RGB}{128, 0, 6}
\definecolor{citegreen}{RGB}{46, 126, 42}
\definecolor{urlmagenta}{RGB}{138, 0, 135}
\begin{document}
\title{Allele trees for the mother-dependent neutral mutations model and their scaling limits in the rare mutations regime}
	\author{  Airam Blancas\thanks{Department of Statistics, ITAM, Mexico.
		Email: {airam.blancas@itam.mx}},  Mar\'ia Clara Fittipaldi\thanks{Facultad de Ciencias, UNAM, Mexico. Email: {mcfittipaldi@ciencias.unam.mx} }, 
 Sara\'i Hern\'andez-Torres\thanks{Instituto de Matem\'aticas, UNAM, Mexico. Email: {saraiht@im.unam.mx} }}
	\maketitle
	\date{}
	\vspace{-.2in}
\maketitle

\begin{abstract}
The mother-dependent neutral mutations model describes the evolution of a population across discrete generations, where neutral mutations occur among a finite set of possible alleles.  In this model, each mutant child acquires a type different from that of its mother, chosen uniformly at random.
In this work, we define a multitype allele tree associated with this model and analyze its scaling limit through a Markov chain that tracks the sizes of allelic subfamilies and their mutant descendants.
We show that this Markov chain converges to a continuous-state Markov process, whose transition probabilities depend on the sizes of the initial allelic populations and those of their mutant offspring in the first allelic generation.
As a result, the allele tree converges to a multidimensional limiting object, which can be described in terms of the universal allele tree introduced by Bertoin in~\cite{Bertoin10}.
\end{abstract}

\section{Introduction}

The Bienaymé-Galton-Watson (BGW) branching process is a classical model for the evolution of population size, in which each individual produces a random number of offspring independently of others, according to a fixed offspring distribution. The BGW process tracks the number of individuals across generations, indexed by discrete time, where generations represent the number of ancestral steps between an individual and the original ancestor.

BGW processes have been extensively studied, and their variations extend the model's applicability beyond population size, allowing for the modeling of a wide range of phenomena that shape population structure. Among the key forces driving population change, gene mutations play a crucial role. Given a specific gene, mutations give rise to different versions, referred to as \emph{alleles}. Research in computational biology and genetics has highlighted the need for models that consider a finite number of alleles; see~\cite{PMID:29030470, MSRM13, SiFit17}.

The \emph{mother-dependent neutral mutations model}, 
introduced in~\cite{BFH}, addresses this need by modeling the genealogy of a population undergoing neutral mutations with a finite number of alleles (see Subsection~\ref{subsec:mother_dependent_neutral_mutations_model}). In this work, we analyze its long-term allelic structure in the following sense.
Alleles classify the population's genealogy according to genetic type, leading to a partition into \emph{allelic subfamilies}, which are encoded in a \emph{multitype allele tree} (defined in Section~\ref{sec:alleleTree}). Our main focus is on the allele tree and its scaling limit.

Our analysis considers a population modeled by mother-dependent neutral mutations, where population growth is in the critical regime, the mutation rate is small, and the number of original ancestors is large (see Hypotheses (H\ref{hyp:start}), (H\ref{hyp:mutation}), and (H\ref{hyp:size}) below).

Before stating our main theorem, we first define the mother-dependent neutral mutations model as a multitype BGW process.

\subsection{Mother-dependent neutral mutations model} \label{subsec:mother_dependent_neutral_mutations_model}

The \emph{mother-dependent neutral mutations model} describes the evolution of a population with a finite number of alleles, 
where random mutations alter the genotype of individuals without affecting their offspring distribution--and hence without influencing population size dynamics. Because of this property, these mutations are referred to as \emph{neutral}.

Children who inherit the same allele as their mother will be referred to as \emph{clones}, while those who acquire a different allele will be called \emph{mutants}.
In the mother-dependent neutral mutations model, population size follows a standard BGW process, and mutations arise along the ancestral lines of the population. A parameter $r \in (0,1)$ controls the \emph{mutation probability}, 
with mutations occurring randomly within the population and independently of one another. 
Whenever a mutation event occurs, the mutant child does not inherit the mother's allele but instead acquires a different allele chosen uniformly at random from a finite set of possible alleles.  Throughout this work, let $d$ be the number of alleles, which we label as $[d] \coloneqq \{  1, \ldots,  d\}$.
Then, the joint distribution of mutant children follows a multinomial distribution, which we introduce next.

We denote by $ \boldsymbol{\mu} = (\boldsymbol{\mu}_i, i\in [d]) $
the offspring distribution of our population, where each $\boldsymbol{\mu}_i$ is a measure valued  on $\mathbb{N}_0^d$. The measure $\boldsymbol{\mu}_i$ specifies the offspring distribution (by type) of an individual of type  $i$.
Since we assume that the mutations in our model are neutral, any individual, regardless of type, follows a common offspring distribution $ \mu $ for its \emph{total number of children}. We thus assume that $\mu$ is a probability measure supported  on $\mathbb{N}_0$, satisfying $ \mu (0) + \mu (1) < 1 $ (to avoid trivial cases). 

For $i \in [d]$, the measure $\boldsymbol{\mu}_i$ defines the distribution of the random vector $\boldsymbol{\xi}^i$, corresponding to the offspring of an individual of type $i$. According to our description above, the probability that an individual of type $i$ begets $\VEC{v} = (\VEC{v}(1), \ldots ,\VEC{v}(d)) \in \N_0^d $ children is
\begin{equation}\label{eq: mothdepproba}
	\begin{split}
		 \boldsymbol{\mu}_i(\VEC{v})  
		 &= \mathbb{P}_i( \boldsymbol{\xi}^{i}=\VEC{v}) \\ 
		 &= \mathbb{P}_{\VEC{e}_i}\left[ (\boldsymbol{\xi}^{i}{(1)},\dots,\boldsymbol{\xi}^{i}{(d)})=(\VEC{v}(1),\dots,\VEC{v}(d))\right]\\
		 &= \mu(|\VEC{v}|) \binom{|\VEC{v}|}{\VEC{v}(1),\dots,\VEC{v}(d)} (1-r)^{\VEC{v}(i)} \prod\limits_{j \neq i}\left(\dfrac{r}{d-1}\right)^{\VEC{v}(j)}.\\  
	\end{split}
\end{equation}
In the equation above, $\mu_i( {\VEC{v}} )$  is the probability that an individual of type $i$ gives birth to $\vert\VEC{v}\vert$ children, with $\VEC{v}(j)$ of them being of type $j$. 

Equation~\eqref{eq: mothdepproba} implies that, among the $\mu(|\VEC{v}|)$ children, each undergoes a mutation with probability $r \in (0,1)$, independently of the others. The types of mutant children are assigned according to a multinomial distribution.
Then the random variable 
\begin{equation} \label{eq:offspringdist1}
	\xi^{(+)}\overset{d}{=}\xi^{(+,i)}\coloneqq \sum_{j=1}^{d}\boldsymbol{ \xi}^{i}(j), \qquad i\in[d],
\end{equation} 
follows distribution $\mu$, which is the same for all types of individuals.
 
In the mother-dependent neutral mutations model, the population is described by a $d$-type BGW process $\{ \VEC{Y} = (\VEC{Y}_k, k \in \mathbb{N}_0); \mathbb{P}_{\VEC{a}}^{\boldsymbol{\mu}, r}\}$.  Here, we indicate  that the process $\VEC{Y}$ has $\boldsymbol{\mu}$ as its offspring distribution (as defined in~\eqref{eq: mothdepproba}), with mutation rate $r \in [0,1]$, and starts with $\VEC{a} \in \mathbb{N}_0^d$ individuals. That is, $  \VEC{a}(i) \in \mathbb{N}_0$ represents the initial number of individuals of type $i$. Since the relevant parameter for the measure  $\boldsymbol{\mu}$ is the mutation rate $r$, we will use the notation  $\mathbb{P}_{\VEC{a}}^{r}$.

For each $k\in\mathbb{N}_0$,  $\VEC{Y}_k$ describes the allelic configuration of the population in the $k$-th generation (also referred to as the $k$-th level), so that for each $i \in [d]$,  $\VEC{Y}_k(i)$ denotes the number of individuals of type $i$ in the $k$-th level. 
The process 
$\VEC{Y} \coloneqq(\VEC{Y}_k, k \in \mathbb{N}_0)$
is a discrete-time Markov chain, recursively defined for each $j\in [d]$ by
\begin{equation}\label{eq:multipopsize}
	\VEC{Y}_{0}(j)  =  \VEC{a}(j) \qquad \text{and} \qquad  \VEC{Y}_{k+1}(j)  =  \sum_{i=1}^{d} \sum_{\ell=1}^{ \VEC{Y}_k (i)} \boldsymbol{\xi}_{k+1,\ell}^i{(j)} , \qquad  \text{ for all }  k \in \N_0.
\end{equation}
In the definition above, $ \boldsymbol{\xi}^i_{k+1,\ell}(j)$ denotes the number of children of type $j$ in the $(k+1)$-th level begotten by the $\ell$-th individual of type $i$ in level $k$. 
The random vectors $(\boldsymbol{\xi}^i_{k,\ell})_{k \in \mathbb{N}_0 , \ell  \in \mathbb{N} }$ are independent copies of the random vector $ \boldsymbol{\xi}^i$ defined in~\eqref{eq: mothdepproba}.


	\tikzset{
		fannode/.style={circle, draw=white, fill=white, line width=3pt, minimum size=1cm},
		rorangenode/.style={circle, draw=black, fill=orange, line width=2pt, minimum size=1cm},
		orangenode/.style={circle, draw=black, fill=orange!25, line width=2pt, minimum size=1cm},
		rosanode/.style={circle, draw=black, fill=magenta!40, line width=2pt, minimum size=1cm},
		rrosanode/.style={circle, draw=black, fill=magenta, line width=2pt, minimum size=1cm},
		verdenode/.style={circle, draw=black, fill=verdesi!20, line width=2pt, minimum size=1cm},
		rverdenode/.style={circle, draw=black, fill=verdesi, line width=2pt, minimum size=1cm},
		lilanode/.style={circle, draw=black, fill=lila!25, line width=2pt, minimum size=1cm},
		rlilanode/.style={circle, draw=black, fill=lila, line width=2pt, minimum size=1cm}
	}
	\begin{figure} [ht]
        \begin{subfigure}{.5\textwidth}
        \centering
	\resizebox{.9\textwidth}{!}{%
	\begin{tikzpicture}[
		>=latex,
		level 1/.style={sibling distance=69mm, level distance=20mm},
		level 2/.style={sibling distance=20mm},
		level 3/.style={sibling distance=12mm},
		level 4/.style={sibling distance=11mm},
		line/.style={edge from parent/.style={solid, magenta, ultra thick, draw}},
	        line2/.style={edge from parent/.style={solid, cyan, ultra thick, draw}}
		]
\node[rverdenode] { } 
	child{node[verdenode] {}
			child{node[verdenode] {}
				child{node[rlilanode] {} 
					edge from parent node {\textcolor{red}{\textbf{\large X}}}
				}        
			}
			child{node[rorangenode] {}
				child{node[orangenode] {} }
			        child{node[orangenode] {} }
			        edge from parent node {\textcolor{red}{\textbf{\large X} } }
			}
			child{node[rorangenode] {}
				child{node[rverdenode] {} 
				edge from parent node {\textcolor{red}{\textbf{\large X}} } 
				}
				edge from parent node {\textcolor{red}{\textbf{\large X}} }
			}         
		}
	child{node[rorangenode] {}
		child{node[rlilanode] {} 
		        child {node[rorangenode] {} 
                edge from parent node {\textcolor{red}{\textbf{\large X}}}}
			child {node[lilanode] {} }
			edge from parent node {\textcolor{red}{\textbf{\large X}} } 
		} 
		child{node[orangenode] {}}
		child{node[rverdenode] {}
			child {node[verdenode] {} }
			edge from parent node {\textcolor{red}{\textbf{\large X}} }
		}
		child{node[rlilanode] {}  
		         child{node[lilanode] {} }
			 child{node[lilanode] {} }
			edge from parent node {\textcolor{red}{\textbf{\large X}} }
		}
        edge from parent node {\textcolor{red}{\textbf{\large X}}}
	};     
	\end{tikzpicture}
	}
	\end{subfigure}%
        \begin{subfigure}{.5\textwidth}
	\centering
	\resizebox{.9\textwidth}{!}{%
	\begin{tikzpicture}[
		  >=latex,
		glow/.style={
			preaction={
				draw, line cap=round, line join=round,
				opacity=0.4, line width=15pt, #1
			}
		},
		glow/.default=green,
		transparency group,
		level 1/.style={sibling distance=40mm, level distance=20mm},
		level 2/.style={sibling distance=18mm},
		level 3/.style={sibling distance=14mm},
		every node/.style={circle, draw=black, line width=3pt, minimum size=1cm},
		norm/.style={edge from parent/.style={solid, black, thick, draw}},
		fan/.style={edge from parent/.style={draw,line width=5pt,-,white!0} }
		]
        \node[rverdenode] { \Huge{3}} 
        child[norm]{node[circle, draw, ultra thick, rorangenode ]{ \Huge{3} } }
        child[norm]{ node[circle, draw, ultra thick, rorangenode]{ \Huge{1} } 
                 			child[norm]{node[rverdenode]{ \Huge{1} } }           
        }             
        child[norm] {node[circle, draw, ultra thick,   rorangenode ] { \Huge{2} }
			child[norm] {node [rverdenode]{\Huge{2} }}
			child[norm] {node[rlilanode] {\Huge{3}}}
			child[norm] {node[rlilanode] {\Huge{2}} 
			    child[norm] {node[rorangenode] {\Huge{1} }}
			   } 
			} 
        child[norm]{ node[circle, draw, ultra thick,  rlilanode]{ \Huge{1} } }    ;
	\end{tikzpicture}
	}
    \end{subfigure}
    	\caption{
    	Left: A realization of the mother-dependent neutral mutation model with $d = 3$ types. The types $1$, $2$ and $3$ are represented in green, orange, and purple, respectively. Mutations are indicated by marks on the edges, and the representative of each allelic subfamily is shown in a bolder shade.
        Right: Multitype allele tree associated with the colored genealogical tree on the left.
	}
	\label{fig:mother-dependent-tree}
\end{figure}

\subsection{Main result on multiallele trees} \label{subsec:AlleleTreesMainResult}

A  \emph{multitype allele tree} is a process  
\begin{equation} \label{eq:allele}
 	\mathscr{A} = ((\mathcal{A}_u,\mathcal{C}_u, \boldsymbol{d}_u): u \in \mathbb{U})
\end{equation}
indexed by the Ulam-Harris tree $\mathbb{U}$ (introduced in Subsection~\ref{subsec:labels}).
Each $u \in \mathbb{U}$ represents an \emph{allelic subfamily}, where $\mathcal{A}_{u}$ denotes the size of the subfamily, $\mathcal{C}_u$ indicates the allele type, and $\boldsymbol{d}_u$ is a vector containing information about the mutant descendants of the subfamily $u$ (who, by definition, do not belong to the subfamily $u$).
An example of a multitype allele tree is provided in Figure~\ref{fig:mother-dependent-tree}, where we depict the vertices of $\mathcal{U}$ corresponding to allelic families with nonzero size.
Given a mother-dependent neutral mutations model (or a general $d$-type BGW process), its associated multitype allele tree is defined recursively. 
However, care must be taken when ordering the allelic subfamilies according to their types.
We present this construction in Section~\ref{sec:alleleTree}. 

To analyze the asymptotic behavior of a population modeled by mother-dependent neutral mutations, we consider the family of processes:
\begin{equation}  \label{eq:sequenceY}
	\left( \VEC{Y}^{(n)} = (\VEC{Y}^{(n)}_k, k \in \mathbb{N}_0)  \right)_{n \in \mathbb{N}},
\end{equation}
and let $ \mathscr{A}^{(n)} $ be the allele tree associated with $\VEC{Y}^{(n)}$.
For each $n \in \mathbb{N}$, $\VEC{Y}^{(n)}$ follows the law $\mathbb{P}_{\VEC{a}^{(n)}}^{r(n)}$, and its associated allele tree is $ \mathscr{A}^{(n)} $.

Recall that $\VEC{a}^{(n)}$ is the configuration of the ancestral population (at level $0$), and that $ r = r(n) $ is the mutation rate. 
Our main result is established under the following assumptions:

\begin{enumerate}[(H1)]
	\item \label{hyp:start} The initial population satisfies, for some $\VEC{y}\in \N^d$ \footnote{Due to the branching property, in most of the results we will focus on $\VEC{a}^{(n)}=n\VEC{e}_j$. }
	\begin{equation} \label{eq: hip starting}
		\VEC{a}^{(n)}(i) \sim \VEC{y}(i) n,
	\end{equation}
     as $n \to \infty$.
	\item \label{hyp:mutation} The mutation probability satisfies, for some constant $c > 0$, 
	\begin{equation} \label{eq: hip mut}
		r(n)  \sim c n^{-1},
	\end{equation}
	as $n \to \infty$.
    \item  \label{hyp:size} For each $j \in [d]$, let $\boldsymbol{\xi}^{(n,j)} $ be a random variable with distribution $ \boldsymbol{\mu}_j$. 
    The total number of offspring of an individual of type $j$, given by  $\sum_{i\in [d]}\boldsymbol{\xi}^{(n,j)}(i)$, satisfies:
 \begin{equation}\label{eq:size}
 \E\left[\sum\limits_{i\in [d]}\boldsymbol{\xi}^{(n,j)}(i)\right]=1 \qquad \text{ and } \qquad  \text{Var}\left[\sum\limits_{i\in [d]}\boldsymbol{\xi}^{(n,j)}(i)\right]=\sigma^2<  \infty.
	\end{equation}
\end{enumerate}

We are now ready to present our main result. 
The limiting object is the tree-indexed \emph{continuous state branching process} (CSBP), denoted by $ \mathscr{Y} =  \left(\mathcal{Y}_u : u \in \mathbbm{U}\right) $.  This is a process indexed by the Ulam-Harris tree, where each node takes values in the non-negative reals. In this sense, the genealogy remains discrete, while the population composition at each level becomes continuous.
A precise definition of a tree-indexed CSBP  is provided in Definition~\ref{def:treeCSBP}. 

\begin{theorem} \label{thm:main} 
Consider a sequence of rescaled allele trees
\[
	\mathscr{A}^{(n)} = \left((n^{-2}\mathcal{A}^{(n)}_u,\mathcal{C}^{(n)}_u, n^{-1}\boldsymbol{d}^{(n)}_u): u \in \mathbb{U}\right)
\]
where  each $\mathscr{A}^{(n)}$ is associated with a mother-dependent neutral mutations model following the law $\mathbb{P}_ { n^{-1}\VEC{e}_j }^{r(n)}$, starting with one indivivual of type $j \in [d]$, under the hypothesis (H\ref{hyp:mutation}) and (H\ref{hyp:size}). 
Then, we have the following convergence in the sense of finite dimensional distributions
\[
	\left(n^{-2}\mathcal{A}^{(n)} : u \in \mathbbm{U}\right) \Longrightarrow  \left(\mathcal{Y}_u : u \in \mathbbm{U}\right)
	\qquad n \to \infty,
\]
where $\left(\mathcal{Y}_u : u \in \mathbbm{U}\right)$ is the tree-indexed CSBP  with reproduction measure 
\begin{equation} \label{eq:measureNu}
\nu(dz)= \dfrac{c}{\sqrt{2\pi\sigma^2z^3}}\exp\left\{-\frac{c^2y}{2\sigma^2}\right\}dz, \qquad z>0,
\end{equation}
and random initial population $\theta_1$ which follows the inverse Gaussian distribution $\inverseGauss \left( \frac{1}{c} , \frac{1}{\sigma^2} \right)$. 
This distribution corresponds to the first hitting time of a fixed level by a Brownian motion with positive drift $c > 0$.
In particular, 
\[
	\theta_1 =\inf\{ t\geq0: \sigma B  + ct = 1 \}\sim \inverseGauss \left(\tfrac{1}{c}, \tfrac{1}{\sigma^2}\right) ,
\]
where $B$ is a standard Brownian motion.

Moreover, if we also consider the full structure of the allele tree $\mathscr{A}^{(n)}$, then we obtain the joint convergence in the sense of finite dimensional distributions:
\begin{equation*}
\mathscr{A}^{(n)} \Longrightarrow \left(\left(\mathcal{Y}_u,\mathcal{C}_u,\sum\limits_{i\neq \mathcal{C}_u}\frac{c}{d-1}\mathcal{Y}_u\VEC{e_i}\right): u \in \mathbb{U} \right), \qquad \text{ as } n\to \infty.
\end{equation*}
\end{theorem}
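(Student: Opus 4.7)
The plan is to prove finite-dimensional convergence of $\mathscr{A}^{(n)}$ by induction on the depth of the Ulam-Harris tree $\mathbb{U}$, using the branching property of the mother-dependent model to reduce the whole statement to a one-step convergence (parent subfamily to its children in the allele tree), which is essentially the convergence of the Markov chain tracking (size, type, mutant offspring) established earlier in the paper. Conditional on the allele tree up to depth $K$, the subtrees rooted at depth $K+1$ are independent, each distributed as a fresh allele tree whose initial population is the collection of mutant founders produced by the corresponding parent; iterating the one-step convergence level by level will give the finite-dimensional limit.

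For the base case ($K=0$), starting from $n$ individuals of type $j$, the root subfamily $\mathcal{A}^{(n)}_\emptyset$ is the total progeny of the subcritical BGW obtained from $\mu$ by binomial thinning with clonal-retention probability $1-r(n) = 1-c/n$. Encoding $\mathcal{A}^{(n)}_\emptyset = \inf\{k : S_k = -n\}$ via the random walk with step mean $-c/n$ and variance $\sigma^2 + o(1)$, Donsker's invariance principle yields $S_{\lfloor n^2 t \rfloor}/n \Rightarrow \sigma B_t - ct$ and a continuous hitting-time functional gives $n^{-2}\mathcal{A}^{(n)}_\emptyset \Rightarrow \theta_1 \sim \inverseGauss(1/c, 1/\sigma^2)$. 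The joint convergence $(n^{-2}\mathcal{A}^{(n)}_\emptyset, n^{-1}\boldsymbol{d}^{(n)}_\emptyset)$ then follows from a conditional law of large numbers: given $\mathcal{A}^{(n)}_\emptyset$, the total offspring count is a sum of $\mathcal{A}^{(n)}_\emptyset$ iid offspring, each of which is an independent mutant of type $i \neq j$ with probability $r(n)/(d-1)$, so $n^{-1}\boldsymbol{d}^{(n)}_\emptyset(i) \to \tfrac{c}{d-1}\theta_1$ in probability, and $\mathcal{C}^{(n)}_\emptyset = j$ is deterministic.

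For the inductive step, fix a depth-$(K-1)$ node $u$ with $\mathcal{A}^{(n)}_u \sim n^2 \mathcal{Y}_u$ of type $\mathcal{C}_u$. For each $i \neq \mathcal{C}_u$ there are $\boldsymbol{d}^{(n)}_u(i) \sim n\beta$ mutant founders of type $i$, with $\beta = c\mathcal{Y}_u/(d-1)$, and each child $u'$ of $u$ of type $i$ in the allele tree is the clonal subtree of a single such founder. The family $(\mathcal{A}^{(n)}_{u'} : u' \text{ child of type } i)$ is thus iid, and its sum equals $\inf\{k : S_k = -\boldsymbol{d}^{(n)}_u(i)\}$. Applying the same invariance principle with target level $\sim n\beta$ instead of $n$, the rescaled sum converges to $\tau_\beta = \inf\{t : \sigma B_t + ct = \beta\}$. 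The finer statement required by the theorem -- that the point measure $\sum_{u'} \delta_{n^{-2}\mathcal{A}^{(n)}_{u'}}$ converges to a Poisson point process on $(0,\infty)$ with intensity $\beta\,\nu(dz)$ -- is a null-array triangular limit theorem, with $\nu$ identified as the Lévy measure of the inverse Gaussian subordinator $(\tau_a)_{a \geq 0}$ of \eqref{eq:measureNu}. Iterating the mutant-thinning LLN on each new subfamily and using the conditional independence of subtrees closes the induction at depth $K$.

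The hard part is precisely this one-step point-process limit. Starting from $\sim n\beta$ independent mutant founders, each of whose clonal subfamily has rescaled size $O(1/n) \to 0$, one must show that the full configuration of rescaled sizes converges jointly, as a random point measure, to a PPP with intensity $\beta\,\nu$; this is where the Lévy measure of the first-passage subordinator of $\sigma B + ct$ is identified with $\nu$, and where the $\inverseGauss$ law of $\theta_1$ comes from the $a=1$ marginal of $\tau_a$. Once this Markov-chain-level convergence is in hand, the remaining work -- passing from conditioning on the previous level to conditioning on its limit, preserving independence of subtrees, and tracking the joint law of types and mutant vectors across siblings -- is routine bookkeeping, and both convergences in the theorem follow from iterating depth by depth in $\mathbb{U}$.
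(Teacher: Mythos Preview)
Your proposal is correct and follows essentially the same architecture as the paper's proof: induction on the depth in $\mathbb{U}$, with the base case handled by a Donsker-type invariance principle for the random walk encoding (this is the paper's Lemma~\ref{lem: step1rw}), the one-step point-process limit for the children identified via the L\'evy measure of the inverse Gaussian subordinator (the paper defers this entirely to \cite[Theorem~1]{Bertoin10}), and the induction closed by the Markov/branching property. The only cosmetic difference is that you split the root convergence into ``hitting time via Donsker'' plus ``mutant counts via conditional LLN'', whereas the paper obtains both at once from the joint functional convergence of the $d$-dimensional walk $\VEC{S}^{(n,j)}$ in~\eqref{eq: convRW}; both routes are valid and lead to the same limit.
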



\subsection{Relation to previous work}

The mother-dependent neutral mutations model is a variation of the \emph{BGW process with neutral mutations}, introduced and studied by Bertoin in~\cite{Bertoin10,Bertoin09}. Both models describe neutral mutations, but the BGW process with neutral mutations assumes an infinite number of alleles, where each mutation event produces a novel allele. 
This process is modeled as a BGW process in which each child has a probability $r \in [0,1]$ of undergoing a mutation, thereby introducing a novel allele.

This work follows a similar approach but extends it to the mother-dependent neutral mutations model. 
In particular, we generalize the scaling scheme discussed in \cite{Bertoin10} to the multidimensional case. 
In that framework, the number of individuals is rescaled, and allelic generations are treated as a discrete-time parameter. While our main result can be viewed as a multidimensional extension of~\cite[Theorem 1]{Bertoin10},  there are significant differences between~\cite{Bertoin10} and the present work.

One such difference lies in the construction of the allele tree. Here, we extend the notion of the allele tree to a multitype setting (see Section~\ref{sec:alleleTree}), where special care must be taken with the order in which descendant allelic populations are recorded.

A second key difference is the role played by the clone-mutant Markov chain, defined in Section~\ref{sec:Markovchain}. One of our main tools is Proposition~\ref{prop: stepkrw}, which describes the limit behavior of this chain. Interestingly, the limit depends on the initial population and the types present therein--a distinction that is not possible in~\cite{Bertoin10}, since allele types cannot reappear in that setting. However, when we look at the limiting behavior of the multitype allele tree, we recover an object that can still be constructed from the tree-indexed CSBP 
$\mathscr{Y}$ defined by Bertoin in~\cite{Bertoin10}.


\subsection{Some words about our notation}
 
We denote vectors in boldface, so $\VEC{v} \in \mathbb{R}^d$, and its $k$-th entry is  $\VEC{v}(k)$. Let $\VEC{e}_m  \in \R^d$ be the $d$-dimensional $m$-th unit vector, meaning that $\VEC{e}_m (k) = \ind_m (k)$ for each $ k \in [d]$. We set
$\vert\VEC{v}\vert \coloneqq \sum_{k=1}^d \VEC{v}(k)$.  
Also, for each $\VEC{v} \in \mathbb{R}^d$, we define the vector with its $i$-th entry set to 0 by 
\begin{equation} \label{eq:notationErase}
	\VEC{\bar{v}}^i= (\VEC{v} (1),\dots,\VEC{v} (i-1),0,\VEC{v} (i+1),\dots,\VEC{v}(d)).
\end{equation}
In particular, $\bar{\N}_0^i=\{\VEC{v} \in \mathbb{N}_0^d : \VEC{v}(i)=0\}$
 For convenience, we use the the product notation $\VEC{u}^{\VEC{v}} = \prod_{i=1}^{d} \VEC{u}(i) ^{\VEC{v}(i)}$.
Let $\boldsymbol{\leq}$ denote the \emph{product order} in $\mathbb{R}^d$ where  for $ \mathbf{v}, \mathbf{x} \in \mathbb{R}^d $, we have that $\mathbf{v} \boldsymbol{\leq} \mathbf{x}$ if and only if $\VEC{v}(i)\leq \VEC{x}(i)$, for every $i\in [d]$.

The $d\times d$ identity matrix is $\mathbf{I}_d$. 
We denote a measure on $\mathbb{R}^d$ by  $\boldsymbol{\mu}$ and write $\boldsymbol{\mu}_k$ for its $k$-th entry. 

Two functions are \emph{asymptotically equivalent},  $f (n) \thicksim g (n) $ as $n \to \infty$,  if  $\lim_{n \rightarrow\infty} f(n)/g(n)=1$.

We denote the weak converge of random elements by $\overset{\mathcal{D}}{\Longrightarrow}$. For processes, the  weak convergence of finite dimensional distributions is denoted by  $\Longrightarrow$.

\subsection{Organization of this work}

The rest of this paper is organized as follows.
In Section~\ref{sec:MultitypeBGW} we introduce general notation and properties of multitype BGW trees, including their associated genealogical trees, labelings, and the branching property. 
In Section~\ref{sec:Markovchain}, we present one of our main tools for analysis: the clone-mutant Markov chain. Section~\ref{sec:alleleTree} is devoted to the definition of the multitype allele tree. We then state our asymptotic results in Section~\ref{sec: limitresults}. We conclude providing proofs in Section~\ref{sec:proofs}.

\section{Background on multitype BGW processes} ~\label{sec:MultitypeBGW}

In this section, we consider a general $d$-type BGW process
$\{ \VEC{Y} = (\VEC{Y}_k, k \in \mathbb{N}_0); P_{\VEC{a}}^{\boldsymbol{\nu}}\}$ 
and study its induced genealogical structure. 
In this case, the genealogy is represented by a finite plane tree, where each vertex is assigned a color corresponding to its type. 

We begin by introducing labeling systems that encode both genealogical and type information for a $d$-type BGW process. 
We then introduce notation to identify allelic subfamilies, that is, subtrees consisting of individuals of the same type.
Next, we present the branching property of $d$-type BGW processes, along with a generalization that will be useful in this work.
We conclude the section by discussing how our notation extends from trees to forests.

\subsection{Plane trees}
For non-empty set of vertices $V$, a \emph{plane tree} $\mathbf{t}\subset V\times V$ is a directed and connected planar graph without loops.  If the  set of vertices is finite, we refer to   $\mathbf{t\textbf{}}$ as a \emph{finite plane tree}.
The set of plane trees is denoted by $\mathcal{T}$. A plane tree $\mathbf{t} \in \mathcal{T}$ determines a set of vertices, which we denote by $V(\mathbf{t})$.

A directed edge in $\mathbf{t}$ indicates a parent-child relationship, where the direction of an edge is from the child to the parent.  Therefore, the out-degree of each vertex is either 0 or 1. 
Within a tree, there is a unique vertex $\rho (\mathbf{t})$ with an out-degree of 0, and we refer to this special vertex as the \emph{root}. Whenever the context is clear, we simply write $\rho$.
Otherwise,  there is exactly one edge out of $u$ pointing to a vertex $v$. This means that $u$ is a child of $v$, or equivalently, that $v$ is the parent of $u$. 
If two vertices $u$ and $w$ share $v$ as their parent, we say that $u$ and $w$ are \emph{siblings}.

A \emph{branch} is a sequence of vertices $u_0,  \ldots , u_n$ where $u_{i}$ is the parent of $u_{i+1}$ for all $i = 0, \ldots, n-1$. A branch is also known as a \emph{genealogical line}.

We will often refer to a plane tree simply as a \emph{tree}. A finite collection of trees is a called a \emph{forest}.

\subsection{Labelling a multitype BGW tree} \label{subsec:labels}

In this subsection, we define and establish notation for these trees and introduce two labeling systems for BGW processes: the Ulam-Harris notation,  and the breadth-first order.
The Ulam-Harris notation preserves the genealogical structure, while the breadth-first order arranges individuals based on a breadth-first search over the finite plane tree.
Our notation is consistent with that in~\cite[Section 6.2]{pitman2006combinatorial} and~\cite{Loic2016codmult}.

Consider a tree $\mathbf{t}$ rooted at $\rho$.
The \emph{Ulam-Harris notation} assigns labels to the individuals in $\mathbf{t}$ from the set of finite sequences of positive integers
$\mathbbm{U} \coloneqq \cup_{n\geq 0}\mathbb{N}^n$. These labels track the ancestral lineage of each individual starting from the root $\rho$.

The length of the sequence in $\mathbb{U}$ indicates the level of the individual within the tree. 
Specifically, the label $\emptyset$ represents the root $\rho$ at level 0, while any individual $u =  u_1 \ldots u_k$ has a label that reflects their genealogy in the $k$-th level of the tree.

The Ulam-Harris notation describes parent-child relationships and ancestry within the tree. If $u=  u_1 ... u_k  \in \mathbf{t}$, then $uj= u_1 ...u_kj$ represents the $j$-th children of $u$.

We denote the \emph{length} of the sequence by $\vert u \vert = k$. The \emph{level} of an individual is the number of edges from the root to $u$.
Thus, the root has length $0$ and level $0$. Note that vertices with equal length belong to the same level. 

The Ulam-Harris naturally defines ancestry relationships.
For $u,v\in \mathbf{t}$, we say that $v$ is an \emph{ancestor} of $u$ (or equivalently, $u$ is a \emph{descendant} of $v$) if  there exists  $w\in \mathbbm{U}$ such that $u=vw$. This relationship is denoted by $v\preceq u $ (or $u\succeq v$). In particular, note that  $u\preceq u$ for any $u\in \mathbf{t}$. 


 In the \emph{breadth-first order}, we traverse the tree from its root to its last level, moving across each level in order from left to right.  If $\mathbf{t}$ contains at least $k$ vertices, the $k$-th vertex of $\mathbf{t}$ is denoted by $u_k(\mathbf{t})$. When no confusion arises, we denote the $k$-th vertex by $u_k$.

\subsection{Colored trees and subtrees}~\label{subsec:labelstypes}

A plane tree is a graphical representation of a BGW process.
If we now consider a  multitype  BGW process, the information on the types of individuals can be represented coloring each vertex. 

We now consider $d\geq 2$ types for the individuals, which we will represent with $d$ colors.
The set of $d$-type plane trees is denoted by $\mathcal{T}_d$. 
We associate with each $\mathbf{t} \in \mathcal{T}_d$ a mapping
$C_{\mathbf{t}}: V(\mathbf{t}) \rightarrow [d]$. For $v\in V(\mathbf{t})$, the integer
$C_{\mathbf{t}}(v)$ is called the type (or color) of $v$. We show an example in
Figure~\ref{fig:mother-dependent-tree}, where the color of each vertex indicates the function $C_{\mathbf{t}}$.
The pair $(\mathbf{t}, C_{\mathbf{t}})$ is a $d$-type tree. When no confusion arises, we simply write $\mathbf{t}$  and refer to it as the \emph{colored genealogical tree} of a BGW process.

\begin{definition} \label{def:subtree}
	Fix a type $i \in [d]$. A subtree $\mathbf{s}$ of type $i$ in $(\mathbf{t}, C_{\mathbf{t}}) \in \mathcal{T}_d$ is a maximal connected subgraph of $(\mathbf{t}, C_{\mathbf{t} })$ satisfying the following conditions:
	\begin{enumerate}[(i)]
		\item all vertices in $\mathbf{s}$ are of type $i$;  
		\item either the root $\rho \left( \mathbf{s} \right)$  has no parent  or the type of its parent is different from $i$; and
		\item if $v$ is a child of $w$ so that  $v\notin V \left( \mathbf{s} \right)$ and $w\in V \left( \mathbf{s} \right)$, then  $C_{\mathbf{t}}(v)\neq i$. 
\end{enumerate}
\end{definition}

Subtrees of type $i$ in $(\mathbf{t}, C_{\mathbf{t}})$ are ranked according to the Ulam-Harris order of  their roots in $\mathbf{t}$ and are denoted by $\mathbf{s}_1^{(i)}, \mathbf{s}_2^{(i)}, \dots, \mathbf{s}_k^{(i)}, \dots$. The forest $\mathbf{f}^{(i)} \coloneqq \{ \mathbf{s}_1^{(i)}, \mathbf{s}_2^{(i)}, \dots, \mathbf{s}_k^{(i)}, \dots \}$ is called the subforest of type $i$ of $(\mathbf{t}, C_{ \mathbf{t} })$.

\subsection{General branching property}

The branching property of BGW processes describes a natural conditional independence arising from their genealogical structure. For $d$-type BGW processes, this property states the following for any level $k$: conditioned on the number of individuals and their types in the $k$-th level, $v_1, \ldots, v_N$, the subtrees rooted at $v_1, \ldots, v_N$  are independent. 

The branching property immediately implies the following.
 For each $i\in [d]$, let ${P}_{\VEC{a}(i)\VEC{e}_i}^{\boldsymbol{\nu}}$  denote the law of a $d$-type BGW process starting with $\VEC{a}(i)\geq 0 $ individuals of type $i$ and offspring distribution $\boldsymbol{\nu}$. If $\VEC{a} = (\VEC{a}(1), \ldots, \VEC{a}(d)) $, the branching property implies that $P_{\VEC{a}}^{\boldsymbol{\nu}}$ is equal in law to the convolution of the measures $P_{\VEC{a}(1)\VEC{e}_1}^{\boldsymbol{\nu}}, \dots,P_{\VEC{a}(d)\VEC{e}_d}^{\boldsymbol{\nu}} $.

In Theorem~\ref{th:lineaparo}, we state a stronger version of the branching property. To prepare for this result, we will follow \cite[Section 2]{Jagers1989} to  introduce concepts originally defined in \cite{chauvin1986propriete} and applied for instance in \cite{kyprianou2000}.  The general branching property has also been established for other variants of BGW processes; for example, for   BGW in varying environment~\cite[Proposition 2.1]{BlancasPalau}.

Let $\VEC{t}$  be a $d$-type tree rooted at $\rho$. A \emph{stopping line}  $L$ in $(\VEC{t} , C_{\VEC{t} })$ 
is a set of vertices of $\VEC{t}$ satisfying the following condition:
every branch starting at the root contains at most one vertex in $L$.
Hence, if we consider two different vertices in a stopping line $L$,  then neither can be a descendant of the other. 
Roughly speaking, a stopping line is a perpendicular cut of genealogical lines by taking at most one individual from each genealogical line. See Figure~\ref{fig:stoppingline} below for an example of two stopping lines. 

\tikzset{
		fannode/.style={circle, draw=white, fill=white, line width=3pt, minimum size=1cm},
		rorangenode/.style={circle, draw=black, fill=orange, line width=2pt, minimum size=1cm},
		orangenode/.style={circle, draw=black, fill=orange!25, line width=2pt, minimum size=1cm},
		rosanode/.style={circle, draw=black, fill=magenta!40, line width=2pt, minimum size=1cm},
		rrosanode/.style={circle, draw=black, fill=magenta, line width=2pt, minimum size=1cm},
		verdenode/.style={circle, draw=black, fill=verdesi!20, line width=2pt, minimum size=1cm},
		rverdenode/.style={circle, draw=black, fill=verdesi, line width=2pt, minimum size=1cm},
		lilanode/.style={circle, draw=black, fill=lila!25, line width=2pt, minimum size=1cm},
		rlilanode/.style={circle, draw=black, fill=lila, line width=2pt, minimum size=1cm}
	}
	\begin{figure}[ht]
    \centering
    \resizebox{.5\textwidth}{!}{%
	\begin{tikzpicture}[
		>=latex, glow/.style={%
			preaction={draw,line cap=round,line join=round,
				opacity=0.5,line width=15pt,#1}},glow/.default=green,
		transparency group,
		level 1/.style={sibling distance=69mm, level distance=20mm},
		level 2/.style={sibling distance=20mm},
		level 3/.style={sibling distance=12mm},
		level 4/.style={sibling distance=11mm},
		line/.style={edge from parent/.style={solid, magenta, ultra thick, draw}},
	        line2/.style={edge from parent/.style={solid, cyan, ultra thick, draw}}
		]
\node[rverdenode] { } 
	child{node[verdenode] {}
			child{node[verdenode] {} 
				child{node[rlilanode, glow] {} 
				edge from parent node {\textcolor{red}{\textbf{\large X}} } 
				}        
			}
			child{node[rorangenode, glow] {} 
				child{node[orangenode] {} }
			        child{node[orangenode] {} }
			edge from parent node {\textcolor{red}{\textbf{\large X}} }
			}
			child{node[rorangenode, glow] {}
				child{node[rverdenode, glow=pink] {} 
				edge from parent node {\textcolor{red}{\textbf{\large X}} } 
				}
			edge from parent node {\textcolor{red}{\textbf{\large X}} }
			}         
		}
	child{node[rorangenode, glow] {}
		child{node[rlilanode,glow=pink] {} 
		        child {node[rorangenode] {} 
		        edge from parent node {\textcolor{red}{\textbf{\large X}} }
               } 
			child {node[lilanode] {} }
		edge from parent node {\textcolor{red}{\textbf{\large X}} }
		} 
		child{node[orangenode] {}}
		child{node[rverdenode, glow=pink] {}
			child {node[verdenode] {} }
			edge from parent node {\textcolor{red}{\textbf{\large X}} }
		}
		child{node[rlilanode,glow=pink] {}  
		         child{node[lilanode] {} }
			 child{node[lilanode] {} }
		edge from parent node {\textcolor{red}{\textbf{\large X}} }
		}
	edge from parent node {\textcolor{red}{\textbf{\large X}} }
	};     
	\end{tikzpicture}}
    	\caption{
    	Below, in~\eqref{eq:linemut}, we define the set $L_n$ of mutant individuals in the $n$-th allelic generation. 
    	Each $L_n$ forms a stopping line. In this figure, we show the stopping lines $L_1$ and $L_2$ are  highlighted in green and pink, respectively.
    	}
	\label{fig:stoppingline}
\end{figure}

For each vertex $v\in \VEC{t}$, we define the $\sigma$-algebra with the information of the ancestors of $v$
\[ 
	\mathcal{G}_v \coloneqq \sigma( \Omega_u : u \preceq v) \times \mathscr{C},
\]
where   $\mathscr{C}$ is the $\sigma$-algebra generated by the finite set of types $[d]$  and for each vertex $u\in \VEC{t}$, $\Omega_u$ represents the information concerning $u$ and its children.
In particular, a stopping line $L$ satisfies that 
 $\{u\in L  \}\in \mathcal{G}_u$. 

Given a stopping line $L$ in a tree, we denote by $ \textbf{Pr } L=\{u \in \VEC{t} : u \succ L\}$ the \emph{progeny} of $L$, defined as the set of vertices which are descendants of individuals in the stopping line $L$.
We associate to the stopping line $L$ the information on type and number of children for all vertices \emph{in the complement of} the branches starting in $L$. We then consider the \emph{$\sigma$-algebra preceding-$L$} 
\[ 
	\mathcal{E}_L \coloneqq  \sigma(\Omega_u: u \notin \textbf{Pr } L) \times \mathscr{C}. 
\]
With these definitions, we are now in position to state a general version of the branching property.

\begin{theorem}[{\cite[Theorem 3.1]{Jagers1989}}] \label{th:lineaparo}
Let $ \{ (\VEC{Y}_n, n \in \mathbb{N}_0); P_{\VEC{e}_j}^{\boldsymbol{\nu}} \}$ be a $d$-type BGW process whose colored genealogical tree is $(\VEC{t}, C_{\VEC{t}} )$.
Let $L$ be a stopping line for the tree $(\VEC{t}, C_{\VEC{t}})$. Conditioned on $\mathcal{E}_L$, the subtrees $\{(\VEC{s}_j, C_{\VEC{s}_j}) \subset (\VEC{t}, C_{\VEC{t}}): \rho(\VEC{s}_j)\in L \}$ are independent. 

Moreover, if $\ell \in L$ and  $(\VEC{s}_j, C_{\VEC{s}_j})$ is the tree rooted at $\ell$, then 
 $(\VEC{s}_j, C_{\VEC{s}_j})$ conditioned on $\mathcal{E}_L$ has the same law as the genealogical tree associated with
$ \{ (\VEC{Y}_n, n \in \mathbb{N}_0); P_{\VEC{e}_k}^{\boldsymbol{\nu}} \}$, where $k \coloneqq C_{\mathbf{t}}( \ell )$ is the type of the root $\ell$. In other words, we have for every $i\in[d]$
	\[
	\mathbb{E}^{\boldsymbol{\nu}}_{\VEC{a}}\left( \prod_{\ell \in L} \varphi_{\ell} ( \VEC{t}_{\ell}, C_{\VEC{t}_{\ell}} ) \Big|  \mathcal{E}_L \right) 
	=  
	\prod_{\ell \in L} \mathbb{E}_{ \VEC{e}_i }^{\boldsymbol{\nu}}\left(  \varphi_{\ell} ( \VEC{t}_{\ell}, C_{\VEC{t}_{\ell}} )   \right),
	\] 
	where $(\varphi_{\ell}, \ell \in L)$ is any non-negative measurable function are non-negative measurable functions on every $ \mathcal{G}_{\ell}$, $\ell \in L$. 
\end{theorem}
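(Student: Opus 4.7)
The plan is to reduce the stopping-line branching property to the ordinary branching property at deterministic levels, which is immediate from the recursive construction~\eqref{eq:multipopsize} via the i.i.d.\ reproduction vectors $(\boldsymbol{\xi}^{i}_{k,\ell})$. The crucial measurability input is the defining property $\{u\in L\}\in\mathcal{G}_u$ of a stopping line, which allows the indicator $\mathbbm{1}_{\{u\in L\}}$ to be pulled inside any conditional expectation whose generating $\sigma$-algebra contains $\mathcal{G}_u$.

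First, I would record the deterministic-level branching property: for each $n\geq 0$, conditionally on $\mathcal{F}_n:=\sigma(\Omega_u:|u|<n)$, the colored subtrees $(\VEC{t}_u,C_{\VEC{t}_u})$ rooted at the level-$n$ vertices of $\VEC{t}$ are mutually independent, and each one has the law of a BGW tree with a single ancestor of type $C_{\VEC{t}}(u)$. This is immediate from~\eqref{eq:multipopsize}, because the reproduction data past level $n$ is built from the i.i.d.\ families $(\boldsymbol{\xi}^{i}_{k,\ell})_{k\ge n}$, which are independent of $\mathcal{F}_n$.

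Next, I would prove the theorem for stopping lines $L$ with $N:=\sup_{\ell\in L}|\ell|<\infty$. Complete $L$ to a stopping line $\widetilde L=L\cup\{u:|u|=N,\ u\not\succeq\ell\text{ for every }\ell\in L\}\subset\{|u|\le N\}$, and partition it by level as $\widetilde L_n=\{u\in\widetilde L:|u|=n\}$. A downward induction on $n=N,N-1,\ldots,0$ applies the previous step at level $n$ to split the conditional expectation into a product over vertices of $\widetilde L_n$: factors attached to $u\in\widetilde L_n\cap L$ reduce to $\mathbb{E}_{\VEC{e}_{C_{\VEC{t}}(u)}}^{\boldsymbol{\nu}}(\varphi_u(\VEC{t}_u,C_{\VEC{t}_u}))$ thanks to the measurability of $\mathbbm{1}_{\{u\in L\}}$, factors attached to vertices of $\widetilde L\setminus L$ are trivial (the test function is taken to be identically $1$ there), and the remaining ancestral factors are absorbed into the next step of the induction via the tower property. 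This produces the factorisation
\[
	\mathbb{E}_{\VEC{a}}^{\boldsymbol{\nu}}\!\left(\prod_{\ell\in L}\varphi_\ell(\VEC{t}_\ell,C_{\VEC{t}_\ell})\,\Big|\,\mathcal{E}_L\right)=\prod_{\ell\in L}\mathbb{E}_{\VEC{e}_{C_{\VEC{t}}(\ell)}}^{\boldsymbol{\nu}}\!\bigl(\varphi_\ell(\VEC{t}_\ell,C_{\VEC{t}_\ell})\bigr),
\]
and the conditional independence of $\{(\VEC{t}_\ell,C_{\VEC{t}_\ell}):\ell\in L\}$ given $\mathcal{E}_L$ follows from the same decomposition by specialising to product test functions.

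Finally, for an arbitrary stopping line $L$, I would approximate by the bounded stopping lines $L^{(n)}=(L\cap\{|u|\le n\})\cup\{u:|u|=n,\ u\not\succeq\ell\text{ for every }\ell\in L\text{ with }|\ell|\le n\}$, apply the previous step to $L^{(n)}$ with $\varphi\equiv 1$ on the added level-$n$ vertices, and let $n\to\infty$ by monotone convergence on both sides of the factorisation. The main obstacle will be the bookkeeping of the downward induction in the bounded case: at each level one must simultaneously verify the $\mathcal{E}_L$-measurability of the indicators $\mathbbm{1}_{\{u\in L\}}$ and the independence of the subtrees rooted above $\widetilde L_n$ from the ancestral data already absorbed into the conditioning. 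This is the content of Jagers's \cite[Theorem 3.1]{Jagers1989} via Chauvin's optional-line formalism \cite{chauvin1986propriete}; once the bounded case is established, extension to general non-negative $\varphi_\ell$ is a standard monotone class argument.
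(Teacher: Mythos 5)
First, a point of reference: the paper does not prove Theorem~\ref{th:lineaparo} at all --- it is imported verbatim as \cite[Theorem 3.1]{Jagers1989} --- so there is no in-paper argument to compare yours against. Your overall strategy (deterministic-level branching property from the i.i.d.\ reproduction vectors, then stopping lines of bounded depth by induction, then general lines by truncation and monotone convergence) is the standard route of Chauvin and Jagers, and it is the right one. You also correctly repair the misprint in the paper's display, writing $\mathbb{E}^{\boldsymbol{\nu}}_{\VEC{e}_{C_{\VEC{t}}(\ell)}}$ where the paper has $\mathbb{E}^{\boldsymbol{\nu}}_{\VEC{e}_i}$ ``for every $i$''.

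There is, however, a genuine gap in the induction step as written. The $\sigma$-algebra $\mathcal{E}_L=\sigma(\Omega_u:u\notin \textbf{Pr } L)\times\mathscr{C}$ contains not only the ancestral reproduction data of $L$ but the \emph{entire subtrees} rooted at every vertex off the progeny of $L$ --- in particular the full, arbitrarily deep subtrees rooted at your completion vertices $\widetilde L\setminus L$. Taking the test function ``identically $1$'' at those vertices only yields independence of $(\VEC{t}_\ell,C_{\VEC{t}_\ell})_{\ell\in L}$ from the strictly ancestral data; it does not yield the conditional factorisation given all of $\mathcal{E}_L$. To get the stated result you must carry arbitrary non-negative functionals of the subtrees rooted at $\widetilde L\setminus L$, together with arbitrary functions of $\{\Omega_u:\ u\notin\textbf{Pr } L,\ |u|<N\}$, through the downward induction and check that they factor out alongside the $\varphi_\ell$'s. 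Your closing admission that the ``main obstacle \ldots is the content of Jagers's Theorem 3.1'' defers precisely this verification to the statement being proved, so the proposal is a correct roadmap rather than a complete proof. Two smaller repairs: the induction is cleaner if run on the maximal depth $N$ of $L$ (condition on the first generation and observe that $L$ restricted to each first-generation subtree is a stopping line of depth at most $N-1$), and the monotone-convergence passage to unbounded $L$ needs the $\varphi_\ell$ to be $[0,1]$-valued (or a truncation argument) for the infinite products on both sides of the identity to converge.
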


\subsection{From trees to forests}

For a finite, non-empty set of vertices $V$, a \emph{finite plane tree} $\mathbf{t}\subset V\times V$ is a directed planar graph without loops. 

For each tree $\mathbf{t}$, the only vertex with an out-degree of $0$ is a \emph{root}, and we will denote it by $\rho (\mathbf{t})$. 
The collection of roots within a forest $\mathbf{f}$ is collectively referred to as the \emph{roots of $\mathbf{t}$}. 

Since a forest may consist of multiple connected components,
 we first need to label the trees in a forest 
 $\mathbf{f}$. 
To do so, we assign an order to the trees and label them as 
\begin{equation} \label{eq:labelForest}
	\mathbf{t}_1(\mathbf{f} ),\mathbf{t}_2(\mathbf{f} ), \ldots, \mathbf{t}_{\ell}(\mathbf{f} ), \dots
\end{equation} 
(or simply $\mathbf{t}_1,\mathbf{t}_2, \dots, \mathbf{t}_{\ell}, \dots$, when when the forest $\mathbf{f}$ is clear from the context). 

In the case of the breadth-first order, we define the labeling for the whole forest by enumerating with $\rho_1$ the root of $\mathbf{t}_1(\mathbf{f} )$, then we perform breadth-first order tree by tree. This enumeration is performed tree by tree, so that, once we have completed the $\ell$-th tree we continue with the $(\ell+1)$-th. Note that the root of the $\ell$-th is labeled with $\sum_{k=1}^{\ell-1} \vert V(\VEC{t}_{k}) \vert +1$, with the convention that $\sum_{k=1}^{0}=1$.

The definition of a stopping line is extended to forests by saying that $L$ is a stopping line in the forest $\mathbf{f}$ if, for each tree  $\VEC{t}_{\ell}$, the intersection of $L$ with the vertices of  $\VEC{t}_{\ell}$ is a stopping line in  $\VEC{t}_{\ell}$. In other words, 
a stopping line in the forest $\mathbf{f}$  is the union of stopping lines across all trees $\VEC{t}_{\ell}$.

\section{Clone-mutant Markov chain} \label{sec:Markovchain}

In this section, we define a Markov chain which keeps track, for each allelic generation, of the number of clone and mutant individuals. This Markov chain is one of our main tools for the analysis of the allelic structure within the mother-dependent neutral mutations model.

\subsection{Definition of the clone-mutant Markov chain}

Let $(\mathbf{t}, C_{\mathbf{t}}) \in \mathcal{T}_d$ be the $d$-type BGW tree associated with the mother-dependent neutral mutations model 
$\{ \VEC{Y} = (\VEC{Y}_k, k \in \mathbb{N}_0); \mathbb{P}_{\VEC{a}}^{r}\}$. 
An individual $u \in (\mathbf{t}, C_{\mathbf{t}})$ belongs to \emph{allelic generation $n$} if its ancestral line contains exactly $n$ mutations (in Figure~\ref{fig:mother-dependent-tree}, we depict these mutations as marks). 

We define $\VEC{T}_n(i)$ as the number of individuals of type $i$ in the $n$-th allelic generation,
including those that initiated allele subfamilies through a mutation.
We refer to each one of these subfamilies as a \emph{clone subfamily in the $n$-th allelic generation}, as all individuals within share the same allele type.

The \emph{mutants of the $n$-th allelic generation} are those individuals in the $n$-th allelic generation 
who have a different allele than their mother and thus initiate a new allele subfamily.
We denote by $\VEC{M}_n{(i)}$ the number of 
mutants of type $i$ in the $n$-th allelic generation.
By convention, individuals in the $0$-th level are
considered mutants of the $0$-th allelic generation.
Note that every mutant of the $n$-th allelic generation is the firstborn of their respective clone families. 

To represent the sizes of clone families and mutant  populations, we introduce the vectors:
\[
\mathbf{M}_n=( \VEC{M}_n{(1)}, \dots , \VEC{M}_n{(d)}) \qquad
\text{and} 
\qquad
\mathbf{T}_n=(\VEC{T}_n(1), \dots ,\VEC{T}_n(d)), \qquad n\in \mathbb{N}_0.
\]
The total number of mutants in the $n$-th allelic generation is given by  $|\mathbf{M}_n|$, while $|\mathbf{T}_n|$ represents the total population of the  $n$-th allelic generation.
Since the process $\VEC{Y}$ starts with $\mathbf{a}$ individuals, we have:
\[
	\VEC{M}_0=\mathbf{a}, \qquad \mathbb{P}_{\mathbf{a}}^{r} \text{-a.s.}
\]

\begin{remark}\label{rem:mutdiftypmoth}
In the mother-dependent neutral mutations model, once a mutation occurs, the mutant offspring must differ in type from its mother. Consequently, mutants in the $(n+1)$-th allelic generation necessarily have a different type from their mother in the $n$-th allelic generation. 

Consider the genealogical tree $\mathbf{t}$ rooted at $\rho$, with type $C_{\mathbf{t}}(\rho) = i$. By definition, this implies that
$\VEC{T}_0 = T_0 \VEC{e}_i$,
where $T_0$ denotes the number of clone descendants of $\rho$. Similarly, $\VEC{M}_1$ represents the vector of mutant descendants arising from the $0$-th allelic generation initiated by $\rho$. 
As a consequence, if $\mathbf{M}_0 = k \VEC{e}_i$ for some $k \in \mathbb{N}$ and $i \in [d]$, then
\[
	\VEC{M}_1=  \mathbf{\bar{M}}_1^{i} 
	 \quad \text{ and } \quad
	 \VEC{T}_0 =  \left( \sum_{\ell = 1}^{k} T_{0,\ell} \right) \VEC{e}_i,
\]
where $\left(T_{0,1}, \dots, T_{0,k} \right)$ are i.i.d copies of $T_0$.
\end{remark}

In the following lemma, we prove that the process  $\left( (\mathbf{T}_k,\mathbf{M}_{k+1}): k \in \mathbb{N}_0   \right)$ is a homogeneous Markov chain.

\begin{lemma}\label{le:TyMMarkov}
Let $\VEC{a} \in \mathbb{N}_0$ be an initial population and let $r \in (0,1)$.
For a mother-dependent neutral mutations model under $\mathbb{P}_{\VEC{a}}^{r}$,
$( \mathbf{M}_k: k \in\mathbb{N}_0 )$ is a $d$-type BGW process with reproduction law 
$\mathbb{P}_{\mathbf{a}}( \mathbf{M}_1\in \cdot)$. 
Moreover, $\left( (\mathbf{T}_k,\mathbf{M}_{k+1}): k \in \mathbb{N}_0   \right)$ is a  Markov chain satisfying that
\begin{equation}\label{probatranTM} 
	\mathbb{P}_{\mathbf{a}}(\mathbf{T}_k =\mathbf{x},\mathbf{M}_{k+1}=\mathbf{y} \left| \right. \mathbf{T}_{k-1}=\mathbf{u},\mathbf{M}_{k}=\mathbf{v} ) 
	= 
	\mathbb{P}_{\mathbf{v}} (\mathbf{T}_0=\mathbf{x},\mathbf{M}_1=\mathbf{y}),
\end{equation}
for all $ \mathbf{u},\mathbf{v}, \mathbf{x}, \mathbf{y}\in\mathbb{N}_0^d$  and  $\mathbf{v} \boldsymbol{\leq} \mathbf{x}$ in the product order.
\end{lemma}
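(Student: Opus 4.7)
The plan is to apply the general branching property of Theorem~\ref{th:lineaparo} to the stopping line
\[
	L_k \coloneqq \{\text{mutants belonging to the $k$-th allelic generation of } (\VEC{t}, C_{\VEC{t}})\}.
\]
First I would verify that $L_k$ is indeed a stopping line: along any genealogical line the $k$-th mutation, if it occurs, singles out a unique vertex, so no two elements of $L_k$ are ancestrally comparable, and $\{u\in L_k\}\in \mathcal{G}_u$ because whether the $k$-th mutation on the ancestral line of $u$ occurs at $u$ itself is determined by the types along that line. The sets $L_1$ and $L_2$ in Figure~\ref{fig:stoppingline} serve as a visual reference.

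Next, I would identify $(\VEC{T}_k, \VEC{M}_{k+1})$ as a measurable functional of the sub-forest of subtrees rooted at $L_k$. For each $\ell \in L_k$ of type $i = C_{\VEC{t}}(\ell)$, the clone subfamily initiated by $\ell$ lies inside the subtree $\VEC{s}_{\ell}$ rooted at $\ell$, while its first-generation mutant offspring coincide with $L_{k+1}\cap \VEC{s}_{\ell}$. Writing $(\VEC{T}_0^{(\ell)}, \VEC{M}_1^{(\ell)})$ for the pair read off from $\VEC{s}_{\ell}$, we obtain
\[
	\VEC{T}_k = \sum_{\ell \in L_k} \VEC{T}_0^{(\ell)},
	\qquad
	\VEC{M}_{k+1} = \sum_{\ell \in L_k} \VEC{M}_1^{(\ell)}.
\]
By Theorem~\ref{th:lineaparo}, conditionally on $\mathcal{E}_{L_k}$ the subtrees $(\VEC{s}_{\ell} : \ell \in L_k)$ are independent, each distributed as a mother-dependent neutral mutations tree under $\mathbb{P}_{\VEC{e}_{C_{\VEC{t}}(\ell)}}^{r}$; consequently the pairs $(\VEC{T}_0^{(\ell)}, \VEC{M}_1^{(\ell)})$ are independent, each with the law of $(\VEC{T}_0, \VEC{M}_1)$ under $\mathbb{P}_{\VEC{e}_{C_{\VEC{t}}(\ell)}}^{r}$.

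Third, both $\VEC{T}_{k-1}$ and $\VEC{M}_k$ are $\mathcal{E}_{L_k}$-measurable (they concern the first $k-1$ allelic generations and the types of the elements of $L_k$ itself, all of which lie outside $\textbf{Pr }L_k$), and on the event $\{\VEC{M}_k = \VEC{v}\}$ the line $L_k$ contains exactly $\VEC{v}(i)$ vertices of each type $i$. Combining the conditional independence above with the ordinary branching property (whereby $\mathbb{P}_{\VEC{v}}$ decomposes as the convolution, over $i\in[d]$, of $\VEC{v}(i)$-fold products of $\mathbb{P}_{\VEC{e}_i}$, as recalled in Section~\ref{sec:MultitypeBGW}) yields
\[
	\mathbb{P}_{\VEC{a}}\bigl(\VEC{T}_k = \VEC{x},\, \VEC{M}_{k+1} = \VEC{y} \,\bigm|\, \VEC{T}_{k-1} = \VEC{u},\, \VEC{M}_k = \VEC{v}\bigr) = \mathbb{P}_{\VEC{v}}(\VEC{T}_0 = \VEC{x},\, \VEC{M}_1 = \VEC{y}),
\]
which simultaneously establishes the Markov property of $((\VEC{T}_k, \VEC{M}_{k+1}))$ and identifies its transition kernel. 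Marginalising over $\VEC{x}$ recovers that $(\VEC{M}_k)_{k\ge 0}$ is a $d$-type BGW process with per-type reproduction law $\mathbb{P}_{\VEC{e}_i}(\VEC{M}_1\in\cdot)$. The product-order constraint $\VEC{v}\boldsymbol{\leq}\VEC{x}$ is automatic: every $\ell\in L_k$ of type $i$ is itself counted in $\VEC{T}_k(i)$, so $\VEC{T}_k(i)\ge \VEC{M}_k(i)$.

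The point requiring care -- rather than a genuine obstacle -- is the measurability identification in the second step: one must check that $\VEC{T}_k$ and $\VEC{M}_{k+1}$ depend only on the sub-forest rooted at $L_k$, and that each summand $(\VEC{T}_0^{(\ell)}, \VEC{M}_1^{(\ell)})$ is precisely the zeroth allelic-generation pair of $\VEC{s}_{\ell}$ under the correctly typed initial law. Once this book-keeping is in place, the lemma follows directly from Theorem~\ref{th:lineaparo}.
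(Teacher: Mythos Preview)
Your proof is correct and follows essentially the same approach as the paper: both apply the general branching property (Theorem~\ref{th:lineaparo}) to the stopping line $L_k$ of $k$-th allelic generation mutants, use the conditional independence of the subtrees rooted there, and read off the transition kernel from the resulting decomposition. Your write-up is in fact more detailed than the paper's, which leaves the verification that $L_k$ is a stopping line, the $\mathcal{E}_{L_k}$-measurability of $(\VEC{T}_{k-1},\VEC{M}_k)$, and the final convolution step largely implicit.
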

\begin{proof}
Let $(\mathbf{f}, C_{\mathbf{f}}) \in \mathcal{F}_d$ be the $d$-type forest associated to the mother-dependent neutral mutations model with law $\mathbb{P}_{\VEC{a}}^{r}$.

Let us consider the set of all individual in the allelic generation $n$ (i.e. having exactly $n$ mutations in their ancestral line) and with a different type than their mother:
\begin{equation}\label{eq:linemut}
	L_n \coloneqq 
	\{   v \in (\mathbf{f}, C_{\mathbf{f}}) : v \text{ is a mutant of $n$-th allelic generation}  \},
\end{equation} 
By the general branching property in Theorem~\ref{th:lineaparo}, conditioned on $L_n = \{ v_1, \ldots, v_k \}$, the trees rooted vertices in $L_n$  are independent $d$-type trees $\mathbf{t}_1, \ldots, \mathbf{t}_k$. Here, each $\mathbf{t}_i$ is associated to an independent mother-dependent processes $\mathbf{W}^{i} $ with law $\mathbb{P}_{ C_{\mathbf{f}}(v_i)}^{r }$, and the familiy of processes  $\mathbf{W}^{1}, \ldots \mathbf{W}^{k} $ are independent. From here we obtain the desired result.
\end{proof}

\subsection{Transition probabilities of the clone-mutant Markov chain}\label{sub:pgfofflaws}

In this section, we characterize the transition probabilities of the clone-mutant Markov chain associated with a mother-dependent neutral mutations model.
Our main tool are moment generating functions, through which we express the joint distribution  $(\VEC{T}_0, \VEC{M}_1)$ in terms of the offspring distribution of the underlying $d$-type BGW process.
We begin by introducing the relevant moment generating functions.

Let $\VEC{Y}$ be a $d$-type BGW process with law $\mathbb{P}_{\VEC{e}_i}^{r}$, 
representing a population starting from a single individual of type $i$, with mother-dependent neutral mutations. Then 
\[
	\VEC{M}_0=\VEC{e}_i, \quad \text{ and } \quad \VEC{M}_1(i)= 0,
\]
as stated in Remark~\ref{rem:mutdiftypmoth}.
Let $\varphi_{i}$ denote the moment generating function of $(\VEC{T}_0(i), \VEC{M}_1)$ under  $\mathbb{P}_{\VEC{e}_i}^{r}$: 
\begin{equation} \label{eq:MGFone}
	\varphi_{i}(x,\VEC{\bar{y}}^i) 
	\coloneqq 
	\mathbb{E}_{\boldsymbol{e}_i}\left[ x^{ \VEC{T}_0(i) } \prod_{j\neq i}   \VEC{y}(j)^{ \VEC{M}_1(j) } \right], \qquad x \in [0,1],\, \VEC{y} \in [0,1]^d.
\end{equation} 
Note that all clones in the $0$-th allelic generation are of type $i$, whereas all mutants in the first allelic generation have type $j\neq i$.

More generally, if the process $\VEC{Y}$ follows a law $\mathbb{P}_{\VEC{a}}^{r}$ then the moment generating function of  $(\VEC{T}_0, \VEC{M}_1)$ is denoted by $\varphi_{\boldsymbol{a}}$ and given by
\[
	\varphi_{\boldsymbol{a}}( \VEC{x}, \VEC{y}) 
	\coloneqq 
	\mathbb{E}_{\boldsymbol{a}}\left[ \prod_{j=1}^d \VEC{x}(j)^{ \VEC{T}_0(j) }  \VEC{y}(j)^{ \VEC{M}_1(j) } \right] ,
	\qquad \VEC{x} \in [0,1]^d,\, \VEC{y} \in [0,1]^d. 
\]

Let ${g}_{i}$ denote the moment generating function of the  multivariate offspring distribution  $\boldsymbol{\mu}_i$ for individuals of type $i$:
\[
	{g}_{i}(\VEC{s})
	\coloneqq 
	\mathbb{E}_{\boldsymbol{e}_i}\left[ \VEC{s}(i)^{ \boldsymbol{\xi}^{i}(i) }  \prod_{j\neq i}  \VEC{s}(j)^{   \boldsymbol{\xi}^{i}(j)  }  \right]
	= \sum_{\VEC{v} \in \mathbb{N}_0^d} \VEC{s}(i)^{ \VEC{v}(i) } \prod_{j\neq i}  \VEC{s}(j)^{ \VEC{v}(j)}  \boldsymbol{\mu}_i (\VEC{v}), \qquad \VEC{s} \in [0,1]^d.
\]
To emphasize the roles of clones and mutants, we write:
\begin{equation}\label{eq: g_i}
	g_{i}(\VEC{s}) = g_{i}( \VEC{s}(i) , \VEC{\bar s}^i), \qquad \VEC{s} \in [0,1]^d, 
\end{equation}
where the first argument tracks clone offspring (all individuals of type $i$) and the second tracks mutants (all individuals of types $j \neq i$). Here we use the notation introduced in~\eqref{eq:notationErase}.

For the process $\VEC{Y} = \left(\VEC{Y}_k : k \in \mathbb{N}_0\right)$ with law $\mathbb{P}_{\VEC{e}_i}^{r}$,  the function $g_i$ is the moment generating function of the offspring distribution of level 1, i.e. $\VEC{Y}_1$.
Now, let us assume an initial population $\VEC{Y}_0=\VEC{a} \in \mathbb{N}^d_0$. In this case, the moment generating function of the random variable $\VEC{Y}_1$ satisfies, for each $\VEC{s} \in [0,1]^d$, that
\begin{align}
	g_{\VEC{a}}(\VEC{s}) 
		&\coloneqq \mathbb{E}_{\VEC{a}}\left[\prod_{j=1}^d \VEC{s}(j)^{ \VEC{Y}_1(j)} \right] \nonumber \\
		&=\mathbb{E}_{\VEC{a}} \left[ \prod_{j=1}^d \VEC{s}(j)^{\sum\limits_{i=1}^d \sum\limits_{\ell=1}^{\VEC{a}(i)} \boldsymbol{\xi}_{1,\ell}^{i}(j) } \right] \nonumber \\
		&=  \prod_{i=1}^d \left(\mathbb{E}_{\VEC{e}_i} \left[ \VEC{s}(i)^{   \boldsymbol{\xi}^{i}(i)  }\prod_{j\neq i} \VEC{s}(j)^{ \boldsymbol{\xi}^{i}(j) } \right]\right)^{\VEC{a}(i)} \nonumber \\
		&=  \prod_{i=1}^d g_i^{\VEC{a}(i)}( \VEC{s}(i) , \VEC{\bar s}^i), \label{eq:eq1MGF}
\end{align}
where the third line follows from the branching property.

We also define
\begin{align*}
	\VEC{g}(\VEC{s}) &= \left(g_{1}(\VEC{s}),\dots,g_{d}(\VEC{s}) \right) \\
					 &= \left(g_{1}(\VEC{s}(1) , \VEC{\bar s}^1),\dots,g_{d}(\VEC{s}(d) , \VEC{\bar s}^d)\right),
\end{align*}
so that
\[
	g_{\VEC{a}}(\VEC{s})=( \VEC{g}(\VEC{s}) )^{\VEC{a}}.
\]

The following proposition describes the law of transition probabilities of the clone-mutant Markov chain under two possible initial conditions: it first considers an initial population consisting of one individual, and then consider the general case for an initial population $\VEC{a} \in \mathbb{N}^d_0$. It can be seen as a generalization of the well-known formula by Dwass~\cite{Dwass69} and Otter~\cite{Otter49} for the distribution of the total population in a BGW process, adapting~\cite[Proposition 1]{Bertoin10} to our framework. The proof of Proposition~\ref{Prop: pgf T_0Mvec} is in Subsection~\ref{proof: pgf T_0Mvec}.

\begin{proposition}\label{Prop: pgf T_0Mvec}
For every $i \in [d]$, the distribution of the random vector $(\VEC{T}_0(i), \VEC{M}_1)$ satisfies the following.
\begin{enumerate}[(i)]
	\item Consider the mother-dependent neutral mutations model under law $\mathbb{P}_{\mathbf{e}_i}^r$ and satisfying (H\ref{hyp:size}). Then the moment generating function of  $(\VEC{T}_0(i), \VEC{M}_1)$ satisfies the equation
		\begin{equation}\label{eq: pgfTM_i}
			\varphi_i( x, \VEC{\bar{y}}^i) =  x g_i(\varphi_i(x, \VEC{\bar{y}}^i), \VEC{\bar{y}}^i), \qquad  x\in [0,1], \VEC{y}\in [0,1]^d;
		\end{equation}
	and the distribution of $(\VEC{T}_0(i), \VEC{M}_1)$ is given by
		\[
		\mathbb{P}_{\VEC{a}(i)\VEC{e}_i}(\VEC{T}_0=k\VEC{e}_i, \VEC{M}_1=\boldsymbol{\ell}) 
		= \frac{\VEC{a}(i)}{k}\boldsymbol{\mu}_i^{*k}(\boldsymbol{\ell} + (k-\VEC{a}(i))\VEC{e}_i), \qquad k-\VEC{a}(i) \in \mathbb{N}_0, \boldsymbol{\ell} \in \bar{\N}_0^i, 
		\]
		where $\boldsymbol{\mu}_i^{*k}$ denotes the $n$-th convolution power of $\boldsymbol\mu_i$.
	
	\item Let $\mathbf{a}\in \mathbb{N}^d_0$ and consider the mother-dependent neutral mutations model under law $\mathbb{P}_{\mathbf{a}}^r$ and satisfying (H\ref{hyp:size}). Then the moment generating function of $(\VEC{T}_0, \VEC{M}_1)$ satisfies
	\[
	\varphi_{\VEC{a}}( \VEC{x}, \VEC{y}) =(\boldsymbol{\varphi}( \VEC{x}, \VEC{y}))^{\VEC{a}}, \qquad  (\VEC{x},\VEC{y}) \in [0,1]^d \times [0,1]^d,
	\] 
where 
\[
	\boldsymbol{\varphi}( \VEC{x}, \VEC{y}) 
	\coloneqq
	\left( \varphi_{1}(\VEC{x}(1);\VEC{\bar{y}}^1) ,\dots, \varphi_{d}(\VEC{x}(d);\VEC{\bar{y}}^d) \right), \qquad  (\VEC{x},\VEC{y}) \in [0,1]^d \times [0,1]^d.
\]
The distribution of $(\VEC{T}_0, \VEC{M}_1)$ is given by 
\[
		\mathbb{P}_{\VEC{a}}\left(\VEC{T}_0=\VEC{k}, \VEC{M}_1 = \boldsymbol{\ell}\right) = \prod_{i=1}^d \frac{\VEC{a}(i)}{\VEC{k}(i)} \sum\limits_{(\VEC{w}^1,\dots,\VEC{w}^d) \in W_{\boldsymbol{\ell}}} \boldsymbol{\mu}_i^{*k_i}(\VEC{w}_i + (\VEC{k}(i)-\VEC{a}(i) )\VEC{e}_i), \qquad \VEC{k} - \VEC{a}, \boldsymbol{\ell} \in \mathbb{N}_0^d,
\]
		where $ W_{\boldsymbol{\ell}} = \{ (\VEC{w}_1,\dots,\VEC{w}_d) \in \bar{\N}_0^1 \times \dots \times \bar{\N}_0^d : \sum\limits_{i=1}^d \VEC{w}_i = \boldsymbol{\ell} \}$.
	\end{enumerate}
\end{proposition}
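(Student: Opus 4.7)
The overall strategy is two-layered: for part (i) I would first condition on the offspring of the root to obtain the fixed-point equation for $\varphi_i$, and then apply the one-variable Lagrange-B\"urmann inversion formula (with $\VEC{\bar{y}}^i$ carried as a parameter) to extract the joint law of $(\VEC{T}_0(i),\VEC{M}_1)$. Part (ii) would then follow quickly from the branching property applied to the $|\VEC{a}|$ founders, grouped by type.

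For part (i), I start under $\mathbb{P}_{\VEC{e}_i}^{r}$ and invoke Theorem~\ref{th:lineaparo} with the stopping line consisting of the children of the root. The contributions to the product $x^{\VEC{T}_0(i)}\prod_{j\neq i}\VEC{y}(j)^{\VEC{M}_1(j)}$ then decompose as follows: the root itself contributes a factor $x$; each of its $\boldsymbol{\xi}^i(i)$ clone children initiates an independent copy of $(\VEC{T}_0(i),\VEC{M}_1)$ under $\mathbb{P}_{\VEC{e}_i}^{r}$, hence a factor $\varphi_i(x,\VEC{\bar{y}}^i)$; and each mutant child of type $j\neq i$ is itself a mutant of the first allelic generation, contributing only the factor $\VEC{y}(j)$, since its descendants belong to \emph{its own} allelic subfamily, not the root's. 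Taking expectations and using the definition~\eqref{eq: g_i} of $g_i$ yields the fixed-point equation~\eqref{eq: pgfTM_i}. To recover the explicit law, I apply the one-variable Lagrange-B\"urmann formula to $\varphi_i=x\,g_i(\varphi_i,\VEC{\bar{y}}^i)$ with $\VEC{\bar{y}}^i$ held fixed; together with the branching identity $\varphi_{\VEC{a}(i)\VEC{e}_i}=\varphi_i^{\VEC{a}(i)}$ this gives
\begin{equation*}
[x^k]\,\varphi_i(x,\VEC{\bar{y}}^i)^{\VEC{a}(i)}=\tfrac{\VEC{a}(i)}{k}\,[z^{k-\VEC{a}(i)}]\,g_i(z,\VEC{\bar{y}}^i)^k.
\end{equation*}
Since $g_i(z,\VEC{\bar{y}}^i)^k$ is the MGF of $\boldsymbol{\mu}_i^{*k}$ with $z$ tracking the $i$-th coordinate and $\VEC{y}(j)$ ($j\neq i$) tracking the others, extracting the coefficient of $\prod_{j\neq i}\VEC{y}(j)^{\boldsymbol{\ell}(j)}$ on the right gives $\boldsymbol{\mu}_i^{*k}(\boldsymbol{\ell}+(k-\VEC{a}(i))\VEC{e}_i)$, which is precisely the claimed formula.

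For part (ii), the branching property at level $0$ tells me that the founding lineages evolve independently. Grouping the $\VEC{a}(i)$ founders of each type $i$ together, the contribution to $(\VEC{T}_0,\VEC{M}_1)$ from type-$i$ founders is distributed as $(\VEC{T}_0,\VEC{M}_1)$ under $\mathbb{P}_{\VEC{a}(i)\VEC{e}_i}^{r}$, and contributions across distinct types are independent. This immediately produces the factorization $\varphi_{\VEC{a}}(\VEC{x},\VEC{y})=\prod_{i=1}^d\varphi_i(\VEC{x}(i),\VEC{\bar{y}}^i)^{\VEC{a}(i)}=(\boldsymbol{\varphi}(\VEC{x},\VEC{y}))^{\VEC{a}}$. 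For the joint distribution, I decompose the mutant vector as $\boldsymbol{\ell}=\sum_i\VEC{w}_i$ with $\VEC{w}_i\in\bar{\N}_0^i$ recording the mutants generated by type-$i$ founders, and I observe that each $\VEC{T}_0(i)$ is carried entirely by founders of type $i$. Applying part~(i) to each factor and summing over $(\VEC{w}_1,\dots,\VEC{w}_d)\in W_{\boldsymbol{\ell}}$ then yields the stated expression.

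The main technical subtlety is justifying the Lagrange-B\"urmann step with $\VEC{\bar{y}}^i$ carried along. I would read the fixed-point equation as an identity of formal power series in $x$ with coefficients in the formal series ring over $\{\VEC{y}(j):j\neq i\}$, so that the classical one-variable Lagrange inversion applies parametrically and sidesteps any analyticity or convergence issue. Once this formal setup is in place, the rest of both parts reduces to bookkeeping.
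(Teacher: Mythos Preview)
Your proposal is correct and follows essentially the same approach as the paper: condition on the root's offspring to obtain the fixed-point equation~\eqref{eq: pgfTM_i}, apply one-variable Lagrange inversion with $\VEC{\bar{y}}^i$ carried as a parameter to extract the joint law, and then invoke the branching property across founders for part~(ii). The only minor difference is in how Lagrange inversion is justified---the paper works analytically (checking that $g_i(0,\VEC{\bar{y}}^i)>0$ under (H\ref{hyp:size}) so that $g_i(z,\VEC{\bar{y}}^i)/z=1/x$ has a unique small solution), whereas you propose the formal-power-series route; both are standard and valid.
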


Proposition~\ref{Prop: pgf T_0Mvec} allows us to generalize~\cite[Corollary 3.4]{Bertoin10} and obtain a simple criteria to decide whether the number of mutant children $|\VEC{M_1}|$ is critical,
sub-critical, or super-critical, or if it has finite second moment.  The proof of Corollary~\ref{cor:ext} can be found in Subsection~\ref{proof:ext}. 

\begin{corollary} \label{cor:ext}
	\begin{itemize}
\item [(i)] Suppose that the mean number of clone children is sub-critical, that is, $\mathbb{E}\left[\boldsymbol{\xi}^{i}(i)\right]<1$ for each $i\in [d]$. Then,
		\[
\mathbb{E}_{\VEC{a}}\left[\VEC{T}_0(i)\right]
=\frac{\VEC{a}(i)}{1-\mathbb{E}\left[ \boldsymbol{\xi}^{i}(i) \right]} 
\qquad \text{and} \qquad 
\mathbb{E}_{\VEC{a}}\left[\VEC{M}_1(j)\right]
=\sum\limits_{i\neq j} \mathbb{E}\left[\boldsymbol{\xi}^{i}(j) \right] \mathbb{E}_{\VEC{a}}\left[\VEC{T}_0(i)\right].
		\] 
In particular,
\begin{equation*}
\mathbb{E}_{\VEC{e}_i}\left[  \vert \VEC{M}_1 \vert \right]
=\frac{\sum\limits_{j \neq i}  \mathbb{E}\left[ \boldsymbol{\xi}^{i}(j) \right] }{1-  \mathbb{E}\left[\boldsymbol{\xi}^{i}(i)\right]   }
			\left\{
			\begin{array}{lcl}
				<1 \\
				=1 \\
				>1 
			\end{array}
			\right.
			\quad \Longleftrightarrow \quad 
\mathbb{E}\left[  \boldsymbol{\xi}^{i}(i) + \sum_{j \neq i} \boldsymbol{\xi}^{i}(j) \right]
			\left\{
			\begin{array}{lcl}
				<1 \\
				=1 \\
				>1 
			\end{array}
			\right. .
		\end{equation*}
		Furthermore, 
\[
\mathbb{E}_{\VEC{e}_i}\left[|\VEC{M}_1|^2\right]< \infty
		\quad \Longleftrightarrow \quad
\mathbb{E}\left[\left(  \boldsymbol{\xi}^{i}(i) + \sum_{j \neq i} \boldsymbol{\xi}^{i}(j)  \right)^2 \right] < \infty.
\]
\item [(ii)] 
If 
$\mathbb{E}\left[ \boldsymbol{\xi}^{i}(i)  \right]=1$ and $\mathbb{E}\left[ \sum_{j \neq o} \boldsymbol{\xi}^{i}(j)  \right]\neq 0$, then $\mathbb{E}_{\VEC{e}_i}\left[|\VEC{M}_1|\right]=\infty$. 
	\end{itemize} 
\end{corollary}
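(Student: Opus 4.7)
The plan is to extract moment information directly from the fixed-point equation
\[
\varphi_i(x, \VEC{\bar{y}}^i) = x\, g_i\!\left( \varphi_i(x, \VEC{\bar{y}}^i), \VEC{\bar{y}}^i \right)
\]
established in Proposition~\ref{Prop: pgf T_0Mvec}, by computing the relevant partial derivatives at the boundary point $(x,\VEC{y})=(1,\mathbf{1})$. Since $g_i$ is the multivariate probability generating function of the offspring vector $\boldsymbol{\xi}^i$, its first partials at $\mathbf{1}$ are the means $\mathbb{E}[\boldsymbol{\xi}^i(i)]$ and $\mathbb{E}[\boldsymbol{\xi}^i(j)]$ for $j\neq i$, so every quantity in the statement is expressible through such derivatives. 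Once the identities are obtained under $\mathbb{P}_{\VEC{e}_i}^r$, the corresponding formulas under $\mathbb{P}_{\VEC{a}}^r$ follow by linearity from part (ii) of Proposition~\ref{Prop: pgf T_0Mvec}: the factorization $\varphi_{\VEC{a}}=\boldsymbol{\varphi}^{\VEC{a}}$ says that $(\VEC{T}_0,\VEC{M}_1)$ decomposes as a sum of independent contributions, one per founder.

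For the first-moment identities in part (i), I will differentiate the functional equation once with respect to $x$ and once with respect to $\VEC{y}(j)$ for $j\neq i$, then evaluate at $(1,\mathbf{1})$. Under sub-criticality $\mathbb{E}[\boldsymbol{\xi}^i(i)]<1$, the clone subfamily of the root is a sub-critical Galton--Watson tree, so $\VEC{T}_0(i)<\infty$ almost surely and $\varphi_i(1,\mathbf{1})=1$. Rearranging each derivative yields
\[
\mathbb{E}_{\VEC{e}_i}[\VEC{T}_0(i)] = \frac{1}{1-\mathbb{E}[\boldsymbol{\xi}^i(i)]}, \qquad \mathbb{E}_{\VEC{e}_i}[\VEC{M}_1(j)] = \frac{\mathbb{E}[\boldsymbol{\xi}^i(j)]}{1-\mathbb{E}[\boldsymbol{\xi}^i(i)]},
\]
and summing over $j\neq i$ gives the closed form for $\mathbb{E}_{\VEC{e}_i}[|\VEC{M}_1|]$. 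The three-way equivalence between the regime of $\mathbb{E}_{\VEC{e}_i}[|\VEC{M}_1|]$ and that of $\mathbb{E}[\boldsymbol{\xi}^i(i)+\sum_{j\neq i}\boldsymbol{\xi}^i(j)]$ is then immediate algebra on the identity
\[
\mathbb{E}_{\VEC{e}_i}[|\VEC{M}_1|]-1=\frac{\mathbb{E}[|\boldsymbol{\xi}^i|]-1}{1-\mathbb{E}[\boldsymbol{\xi}^i(i)]},
\]
since the denominator on the right-hand side is strictly positive.

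For the finiteness criterion on $\mathbb{E}_{\VEC{e}_i}[|\VEC{M}_1|^2]$, I will restrict the generating function to its diagonal by setting $\psi_i(y)\coloneqq \varphi_i(1,y,\ldots,y)$, which is the probability generating function of $|\VEC{M}_1|$ and satisfies the one-dimensional fixed-point equation $\psi_i(y)=g_i(\psi_i(y),y,\ldots,y)$. Differentiating twice and taking $y\uparrow 1$ expresses $\psi_i''(1^-)$ as a linear combination of the first- and second-order partials of $g_i$ at $\mathbf{1}$, with denominator $(1-\mathbb{E}[\boldsymbol{\xi}^i(i)])^2\neq 0$. The first derivatives are finite by part (i), so $\psi_i''(1^-)<\infty$ if and only if the second partials of $g_i$ at $\mathbf{1}$ are finite; a standard multinomial/Cauchy--Schwarz comparison then shows this is equivalent to $\mathbb{E}[|\boldsymbol{\xi}^i|^2]<\infty$.

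Finally, part (ii) falls out of the same diagonal fixed-point equation in the critical clone regime $\mathbb{E}[\boldsymbol{\xi}^i(i)]=1$. A critical Galton--Watson tree goes extinct a.s., so $\VEC{T}_0(i)<\infty$ and hence $|\VEC{M}_1|<\infty$ a.s., giving $\psi_i(1^-)=1$ and monotone convergence of $\psi_i'(y)\uparrow \mathbb{E}_{\VEC{e}_i}[|\VEC{M}_1|]\in[0,\infty]$ as $y\uparrow 1$. Differentiating and passing to the limit yields
\[
\bigl(1-\mathbb{E}[\boldsymbol{\xi}^i(i)]\bigr)\,\psi_i'(1^-)=\sum_{j\neq i}\mathbb{E}[\boldsymbol{\xi}^i(j)];
\]
the left factor vanishes while the right-hand side is strictly positive by hypothesis, forcing $\psi_i'(1^-)=\infty$. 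The main subtlety throughout is the passage to the boundary $y=1$ (or $(x,\VEC{y})=(1,\mathbf{1})$) when differentiating generating functions; this is handled by Abel's theorem for power series together with the monotonicity of the relevant derivatives on $[0,1]^d$.
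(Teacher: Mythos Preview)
Your proposal is correct and follows essentially the same strategy as the paper: differentiate the fixed-point equation $\varphi_i(x,\VEC{\bar y}^i)=x\,g_i(\varphi_i(x,\VEC{\bar y}^i),\VEC{\bar y}^i)$ from Proposition~\ref{Prop: pgf T_0Mvec} and evaluate at $(1,\mathbf{1})$ to read off moments, then lift to general $\VEC{a}$ via the product form $\varphi_{\VEC{a}}=\boldsymbol{\varphi}^{\VEC{a}}$.

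There are two minor differences worth noting. First, for the second-moment criterion the paper computes each mixed partial $\partial^2\varphi_i/\partial y_j\partial y_k$ separately for all $j,k\neq i$ and then sums, eventually arriving at the closed form
\[
\mathbb{E}_{\VEC{e}_i}\bigl[|\VEC{M}_1|^2\bigr]=\frac{1}{1-m_{ii}}\,\mathbb{E}\!\left[\bigl(\mathbb{E}_{\VEC{e}_i}[|\VEC{M}_1|]\,\xi^{(c,i)}+\xi^{(m,i)}\bigr)^2\right].
\]
Your diagonal restriction $\psi_i(y)=\varphi_i(1,y,\ldots,y)$, which satisfies the one-dimensional equation $\psi_i(y)=G(\psi_i(y),y)$ with $G$ the joint p.g.f.\ of $(\xi^{(c,i)},\xi^{(m,i)})$, reaches the same conclusion with less bookkeeping and in fact yields the same identity after one more line of algebra. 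Second, the paper's written proof stops after the second-moment computation and does not spell out part~(ii); your argument via $(1-G_s(\psi_i(y),y))\psi_i'(y)=G_y(\psi_i(y),y)$ and the limit $y\uparrow 1$ in the critical clone case is the natural way to complete it and is correct as stated.
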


\subsection{From random walks to the clone-mutant Markov chain}\label{subsec: rw}

In this section, we begin with a random walk representation of a colored genealogical tree, following~\cite{Loic2016codmult}.
Using this representation, we characterize the law of the pair  $(\mathbf{T}_0, \mathbf{M}_1)$, which describes the transition probabilities of the clone-mutant Markov chain.

Let $(\mathbf{t}, C_{\mathbf{t}}) \in \mathcal{T}_d$ be a colored genealogical tree associated with a mother-dependent neutral mutations model starting with an individual of type $i$.
For each $j \in [d]$, let $\mathbf{s}^{(j)}$ be the subtree of individuals of type $j$, as defined in Definition~\ref{def:subtree}.
We label the vertices in $\mathbf{s}^{(j)}$ as $u_1^{(j)}, u_2^{(j)},\dots$, following the breadth-first search order.

Recall that
\[ 
\boldsymbol{\xi}^{i}=(  \boldsymbol{\xi}^{i}(1), \dots, \boldsymbol{\xi}^{i}(d) )
\] 
is a random vector representing the offspring of an individual of type $i$, and the collection
 $(\boldsymbol{\xi}^{i} , i\in [d])$ consists of independent random variables with distribution $\boldsymbol{\mu}_i$, as given in \eqref{eq: mothdepproba}.
 Within the subtree  $\mathbf{s}^{i}$, we denote by $\boldsymbol{\xi}^{i}_{ u_\ell^{(i)} }(j)$ the number of children of type $j$ produced by the individual  $u_\ell^{(i)}$.

For each $i\in [d]$, we define a $d$-dimensional random walk $(\VEC{S}_k^i , k\in\N_0 )$, where $\VEC{S}_k^i=(\VEC{S}_k^i(1), \dots, \VEC{S}_k^i(d) )$, as follows:
\begin{align}\label{eq: drw}
	\VEC{S}_{k}^{i}(j) &\coloneqq \sum_{\ell = 1}^{k} \boldsymbol{\xi}_{u_\ell^{(i)}}^{i}(j),
	\qquad \text{if } i\neq j \qquad \text{and}  \\
	\VEC{S}_{k}^{i}(i) &\coloneqq 1 +  \sum_{\ell = 1}^{k} (  \boldsymbol{\xi}_{ u_\ell^{(i)} }^{i}(i) -1), \qquad    k\in \N_0 ,
\end{align}
 Note that $\VEC{S}_{k}^{i}(i)$ is a random walk whose increments reflect the number of clone offspring (of type $i$) produced by individuals of type $i$. For $j \neq i$, $\VEC{S}_{k}^{i}(j)$ counts the number of mutant children of type $j$ produced by the first $k$ individuals in $\mathbf{s}^{(i)}$.  

For each $\ell \in \mathbb{Z}$, denote the first hitting time to level $-\ell$ by
\begin{equation} \label{eq:tau}
\boldsymbol{\tau}_{\ell}(i)=\min\{ k\in\N_0: \VEC{S}_{k}^{i}(i)=-\ell \}
\end{equation}
In particular, let
\[
	\boldsymbol{\tau}_0 \coloneqq (\boldsymbol{\tau}_0(1), \dots, \boldsymbol{\tau}_0(d) )
\]
be the vector of first hitting times to 0.  For $i,j\in[d]$, we  define  
\[ 
 \VEC{X}_{k}^{i}(j)  \coloneqq \VEC{S}_{  \boldsymbol{\tau}_k(i) }^{i}(j) , \qquad k \geq 0.
\]
In particular, for each $i,j \in [d]$ with $j\neq i$, 
\[ 
 \VEC{X}_{0}^{i}(j)  = \VEC{S}_{  \boldsymbol{\tau}_0(i) }^{i}(j) =\sum_{\ell = 1}^{ \boldsymbol{\tau}_{0}(i) } \boldsymbol{\xi}^{i}_{\ell}(j) 
\]
is the total number of mutant children with type $j$ produced by individuals of type $i$ up to time $\boldsymbol{\tau}_{0}(i)$. 

The next lemma is a multidimensional version of in~\cite[Lemma 3]{Bertoin10} and its proof  can be found in Subsection~\ref{proof: simL3Ber}. 

\begin{lemma}\label{lem: simL3Ber}
Let $i\in [d]$. Under  $\mathbb{P}^{r}_{\VEC{e}_i}$, we have that the following equality holds in distribution:
\[
	( \boldsymbol{\tau}_0(i),  \VEC{X}_0^{i} ) 
	\overset{\mathcal{D}}{=}
 	( \VEC{T}_0(i),  \VEC{M}_1  ).
\]
 Moreover, under $\mathbb{P}_{ \VEC{a} }^r$
\[
	\left(  \boldsymbol{\tau}_0, \left(  \sum_{j\neq 1} \VEC{X}_0^j(1) , \dots, \sum_{j\neq d} \VEC{X}_0^j(d) \right) \right) \overset{\mathcal{D}}{=}\left( \VEC{T}_0,  \VEC{M}_1 \right) .
\]
In addition, the shifted sequence $( \boldsymbol{\xi}^i_{\boldsymbol{\tau}_0(i)+\ell }: \ell \in \N)$ consists of i.i.d. random vectors with common distribution $\boldsymbol{\mu}_i$, and is independent of $( \boldsymbol{\tau}_0(i),  \VEC{X}_0^{i} )$.
\end{lemma}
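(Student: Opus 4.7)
The main tool is the classical encoding of a plane tree by its Lukasiewicz walk, adapted to our multitype setting. Under $\mathbb{P}^r_{\VEC{e}_i}$ the type-$i$ allelic subtree $\mathbf{s}^{(i)}$ rooted at $\rho$ (in the sense of Definition~\ref{def:subtree}) is itself a single-type BGW tree whose offspring distribution is the marginal of $\boldsymbol{\xi}^i$ on its $i$-th coordinate: whenever a type-$i$ vertex is discovered, the generalized branching property of Theorem~\ref{th:lineaparo} guarantees that its offspring vector is an independent copy of $\boldsymbol{\xi}^i \sim \boldsymbol{\mu}_i$, with the $i$-th coordinate determining the continuation within $\mathbf{s}^{(i)}$ and the remaining coordinates describing mutant children that leave $\mathbf{s}^{(i)}$ and initiate new allelic subfamilies. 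Exploring $\mathbf{s}^{(i)}$ in breadth-first order therefore produces an i.i.d.\ sequence $(\boldsymbol{\xi}^i_{u_\ell^{(i)}})_{\ell \ge 1}$ with common law $\boldsymbol{\mu}_i$, and the process $\VEC{S}^i_k(i)$ in~\eqref{eq: drw} is precisely the Lukasiewicz walk encoding $\mathbf{s}^{(i)}$, namely the number of type-$i$ vertices already discovered but not yet explored.

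With this setup the identification proceeds by a direct almost-sure coupling. By the standard Lukasiewicz bijection, the first time the walk $\VEC{S}^i(i)$ reaches $0$ coincides with the completion of the exhaustive breadth-first exploration of $\mathbf{s}^{(i)}$; hence $\boldsymbol{\tau}_0(i) = |V(\mathbf{s}^{(i)})| = \VEC{T}_0(i)$. For each $j \neq i$, the relation
\[
\VEC{X}_0^i(j) \;=\; \sum_{\ell=1}^{\boldsymbol{\tau}_0(i)} \boldsymbol{\xi}^i_{u_\ell^{(i)}}(j)
\]
aggregates, over all vertices of $\mathbf{s}^{(i)}$, the number of their type-$j$ children. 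Since every ancestral line inside $\mathbf{s}^{(i)}$ is mutation-free, each such type-$j$ child carries exactly one mutation (the one separating it from its mother) and hence belongs to the first allelic generation. This identifies $\VEC{X}_0^i(j)$ with $\VEC{M}_1(j)$ almost surely, yielding the first distributional identity with an explicit coupling.

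For the multi-ancestor case under $\mathbb{P}^r_{\VEC{a}}$, I would decompose the forest at level $0$ into the $|\VEC{a}|$ independent one-ancestor trees and, for each type $i$, concatenate the $\VEC{a}(i)$ type-$i$ allelic subtrees into a single forest. The natural Lukasiewicz walk of this forest is the multi-ancestor extension of~\eqref{eq: drw} starting at $\VEC{a}(i)$ rather than $1$; by the same argument, its first hitting time of $0$ equals $\VEC{T}_0(i)$, while the mutants of type $i$ in the first allelic generation are precisely those emitted by the subforests of types $j \neq i$, which explains the sum $\sum_{j \neq i} \VEC{X}_0^j(i) = \VEC{M}_1(i)$. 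Finally, the claim about the shifted sequence follows from the strong Markov property for i.i.d.\ sequences: $\boldsymbol{\tau}_0(i)$ is a stopping time for the filtration generated by $(\boldsymbol{\xi}^i_{u_\ell^{(i)}})_{\ell \ge 1}$, so the post-$\boldsymbol{\tau}_0(i)$ part is i.i.d.\ with law $\boldsymbol{\mu}_i$ and independent of the pre-$\boldsymbol{\tau}_0(i)$ information, which in particular includes $(\boldsymbol{\tau}_0(i), \VEC{X}_0^i)$.

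The result is essentially a repackaging of the classical Lukasiewicz encoding, so no step is conceptually deep. The most delicate point is justifying that $(\boldsymbol{\xi}^i_{u_\ell^{(i)}})_{\ell \ge 1}$ is a bona fide i.i.d.\ $\boldsymbol{\mu}_i$-sequence, despite the fact that the breadth-first labels $u_\ell^{(i)}$ are themselves random and depend on previously revealed offspring. Handling this cleanly requires either a careful invocation of Theorem~\ref{th:lineaparo} applied to the growing family of stopping lines formed by successive breadth-first frontiers inside $\mathbf{s}^{(i)}$, or equivalently, constructing the tree from a prescribed i.i.d.\ sequence through the standard exploration algorithm and then verifying that the resulting object agrees in law with $\mathbf{s}^{(i)}$.
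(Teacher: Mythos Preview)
Your proof is correct and takes a genuinely different route from the paper's. The paper proceeds analytically: it computes the joint generating function
\[
\widetilde{\varphi}_i(x,\VEC{\bar{y}}^i)=\mathbb{E}_{\VEC{e}_i}\Bigl[x^{\boldsymbol{\tau}_0(i)}\prod_{j\neq i}\VEC{y}(j)^{\VEC{X}_0^i(j)}\Bigr]
\]
by conditioning on the first step of the walk and using the strong Markov property, shows that $\widetilde{\varphi}_i$ satisfies the same fixed-point equation $\widetilde{\varphi}_i(x,\VEC{\bar{y}}^i)=x\,g_i(\widetilde{\varphi}_i(x,\VEC{\bar{y}}^i),\VEC{\bar{y}}^i)$ as $\varphi_i$ in Proposition~\ref{Prop: pgf T_0Mvec}, and then invokes the uniqueness established there to conclude $\widetilde{\varphi}_i=\varphi_i$. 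Your argument, by contrast, is combinatorial: you recognise $\VEC{S}^i(i)$ as the Lukasiewicz walk of the type-$i$ subtree in breadth-first order and read off the identification $\boldsymbol{\tau}_0(i)=\VEC{T}_0(i)$, $\VEC{X}_0^i(j)=\VEC{M}_1(j)$ almost surely on a common probability space, which is strictly stronger than the distributional statement. The paper's approach is shorter and avoids the care you correctly flag around the i.i.d.\ structure under random breadth-first labelling, but it relies on having already proved Proposition~\ref{Prop: pgf T_0Mvec}, including the Lagrange-inversion uniqueness argument. Your approach is self-contained, gives more insight, and handles the shifted-sequence claim more transparently via the strong Markov property for the underlying i.i.d.\ stream, whereas the paper dispatches that part in a single sentence appealing to the strong Markov property without further detail.
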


\section{Multitype allele trees} \label{sec:alleleTree}

We now describe the multitype allele tree $\mathscr{A} = ((\mathcal{A}_u,\mathcal{C}_u, \boldsymbol{d}_u): u \in \mathbb{U})$, first introduced in~\eqref{eq:allele}. Recall that this tree encodes the genealogy of allelic subfamilies arising in a population modeled with mother-dependent neutral mutations. The construction we present below has the same spirit as the reduced tree in~\cite{Loic2016codmult} and extends the allele tree defined in~\cite{Bertoin10} to the multitype branching case.
For simplicity, we begin by constructing the multitype allele tree associated with a population starting with a single individual. By the end of the section, we explain the modifications required for the setting in Theorem~\ref{thm:main}.

Let 
$ \{ \VEC{Y} = (\VEC{Y}_k, k \in \mathbb{N}_0); \mathbb{P}_{\VEC{e}_j}^{r}\} $ 
be a mother-dependent neutral mutations model, so the process $\VEC{Y}$ has an associated $d$-type genealogical tree $(\mathbf{t}, C_{\mathbf{t}}) \in \mathcal{T}_d $, and a clone-mutant Markov chain $( (\mathbf{T}_k, \mathbf{M}_{k+1}) : k \in \mathbb{N_0} ) $. We denote by $\mathscr{A} = \mathscr{A}_{\mathbf{Y}} $ the allele tree associated to $\VEC{Y}$ (omitting the dependence on $\VEC{Y}$ in the notation). 

We construct the multitype allele tree recursively as follows. First, we set
\begin{equation} \label{eq:initAllele}
    \mathscr{A}_{\varnothing} \coloneqq ( \VEC{T}_0 (j), j,\VEC{M}_1) .
\end{equation}
This initial node represents the size of the clone subfamily in the $0$-th allelic generation of $ (\mathbf{t}, C_{\mathbf{t}})$. Note that the number of edges incident to this initial node in $\mathscr{A}$  is $|\VEC{M}_1|$, and the first allelic generation of the multitype allele tree has $\VEC{M}_1(i)$ nodes of color $i$. 

We now consider  $L_1$, the set of mutants in the first allelic generation of $(\mathbf{t}, C_{\mathbf{t}})$, as defined in~\eqref{eq:linemut}. 
Let $\mathcal{S}_1$ be the collection of all the subtrees of a given type rooted at elements of $L_1$, as in Definition~\ref{def:subtree}:
\[
    \mathcal{S}_1
    = 
    \bigcup\limits_{i \in [d]}\left\{ \mathbf{s}^{(i)}  \text{ subtree of type } i \text{ in } \mathbf{t} : \rho(\mathbf{s}^{(i)})\in L_1  \right\}.
\]
These subtrees represent the clone subfamilies in the first allelic generation. 
By construction, $\vert \mathcal{S}_1 \vert = |\VEC{M}_1|=|L_1|$. 

Let $\mathcal{M}_1$ denote the set of mothers of individuals in $L_1$, and define for each $x \in \mathcal{M}_1 $  the number of mutant children: 
\begin{equation} \label{eq:numberMutant}
    \xi^{(m)}(x)
    \coloneqq    
    \sum\limits_{i\neq \mathcal{C}(x)}
	\boldsymbol{\xi}^{\mathcal{C}(x)} (i) , \qquad  x\in \mathcal{M}_1, 
\end{equation}
where $\boldsymbol{\xi}^{\mathcal{C}(x)}$ is the random offspring vector of $x$, which has law $\boldsymbol{\mu}_{\mathcal{C}_x}$.

We now describe how to construct the descendants of the initial node in $\mathscr{A}$:
\begin{itemize}
    \item [\textbf{Step 1}] Rank the mothers in $\mathcal{M}_1$ in decreasing order according to the number of mutant children.

    \item[\textbf{Step 2}] 
	At this point in the construction, we refer to the nodes at the first level of $\mathscr{A}$ by their roots $y \in L_1$ in the original subtree $\mathbf{t}$. Accordingly, we write $\mathscr{A}_y = (\mathcal{A}_y, C_y, \mathbf{d}_y)$.
	Each subtree in $\mathcal{S}_1$ is collapsed into a single node $\mathscr{A}_y$ at the first level of the multitype allele tree. In each $\mathscr{A}_y$, we record $\mathcal{A}_y$ as the number of individuals in the corresponding subtree and let $C_y = C_{ \mathbf{t} } (y)$ denote the type (or color) of these individuals, so we have defined $(\mathcal{A}_y, C_y)$ for $y \in  L_1$.
    From this point forward, a \emph{block} refers to a set of nodes coming from subtrees whose roots are siblings (that is, whose respective roots share the same mother).

    \item[\textbf{Step 3}] 
    Determine the order of appearance of the nodes $ (\mathscr{A}_y : y \in L_1) $ by first ranking the blocks from Step 2, using the ranking introduced in Step 1 for $\mathcal{M}_1$. Hence, the block associated with the highest-ranked mother in Step 1 appears first, and so on. 
    
    \item[\textbf{Step 4}] 
    Inside each block from Step 3, rank the nodes as follows: 
    \begin{itemize} 
        \item Arrange the nodes in increasing order of their type. 
        \item If there are ties among nodes of the same type, break them by ranking in decreasing order of the size of their corresponding subtrees. \end{itemize}
    
    \item[\textbf{Step 5}] 
    The ranking from Step 4 defines the ordered pairs
    \begin{equation} \label{eq:orderNodes}
     (\mathcal{A}_1, C_{1} ) , \ldots , (\mathcal{A}_{\vert \mathbf{M}_1 \vert},        C_{\vert \mathbf{M}_1 \vert}  )
    \end{equation}
        We now define the corresponding vectors $ \mathbf{d}_1, \ldots , \mathbf{d}_{\vert \mathbf{M}_1 \vert}  $. 
        To do this, let  $\mathbf{s}_1$  be the subtree $\mathbf{t}$ rooted at the highest-ranked node  $y_1$, and let $\mathcal{D}_{y_1}$ be the set of mutant children of individuals in  $\mathbf{s}_1$. For each $j \in [d]$, let $\mathbf{d}_1 (j)$ be the number of mutants in $\mathcal{D}_{y_1}$ of type $j$. This defines
        \[
            \mathscr{A}_{1} =  (\mathcal{A}_1, C_1 , \mathbf{d}_1).
        \]
        We proceed similarly for all nodes  $ ((\mathcal{A}_y, C_y, \mathbf{d}_y) : y \in L_1) $, 
        relabeled in the order established in~\eqref{eq:orderNodes}, to define $\mathscr{A}_{1}, \ldots, \mathscr{A}_{\vert \mathbf{M}_1 \vert } $.

    \item[\textbf{Step 6}] By convention, set $\mathscr{A}_u=(0,0,0)$ for all $u>|\VEC{M}_1|$.
\end{itemize}   

We continue the construction recursively: for each one of the nodes $\mathscr{A}_v$ constructed in the previous step and such that  $ \mathcal{A}_v > 0 $, we apply the same procedure, now treating the mutant individuals in the vector $\mathbf{d}_v$ as the roots of a new generation of subtrees descending from node $\mathscr{A}_v$. In other words, we repeat Steps 1 through 4 with $L_1$ replaced by the mutant offspring associated with $\mathscr{A}_v$. In this way, we define the full multitype allele tree. The multitype allele tree satisfies, for all $k \geq 0$ and each $i \in [d]$, the following identities:
\[
\mathbf{T}_k (i) = \sum_{\vert u \vert u = k} \mathcal{A}_u 1_{\{ C_u = i\}}, \qquad
\mathbf{M}_{k + 1} (i) = \sum_{\vert u \vert = k} \mathbf{d}_u (i).
\]

In the setting of Theorem~\ref{thm:main}, the initial population is $n \mathbf{e}_j$ for some type $j \in [d]$.
In this case, the genealogical process is a forest  $\mathbf{f} \in \mathcal{F}_d$.
Since all individuals of allelic generation $0$ are of the same type, we represent them collectively as the root $\varnothing$ of the allele tree and define $(\mathcal{A}_{\varnothing}, \mathcal{C}_{\varnothing}, \mathbf{d}_{\varnothing})$ as in~\eqref{eq:initAllele}. The subsequent levels are defined recursively, as described above. Refer to Figure \ref{fig:multitype allele tree}  for an illustration.

\tikzset{
		fannode/.style={circle, draw=white, fill=white, line width=3pt, minimum size=1cm},
		rorangenode/.style={circle, draw=black, fill=orange, line width=2pt, minimum size=1cm},
		orangenode/.style={circle, draw=black, fill=orange!25, line width=2pt, minimum size=1cm},
		rosanode/.style={circle, draw=black, fill=magenta!40, line width=2pt, minimum size=1cm},
		rrosanode/.style={circle, draw=black, fill=magenta, line width=2pt, minimum size=1cm},
		verdenode/.style={circle, draw=black, fill=verdesi!20, line width=2pt, minimum size=1cm},
		rverdenode/.style={circle, draw=black, fill=verdesi, line width=2pt, minimum size=1cm},
		lilanode/.style={circle, draw=black, fill=lila!25, line width=2pt, minimum size=1cm},
		rlilanode/.style={circle, draw=black, fill=lila, line width=2pt, minimum size=1cm}
	}
	\begin{figure} [ht]
    \centering
    \resizebox{.5\textwidth}{!}{%
	\begin{tikzpicture}[
		  >=latex,
		glow/.style={
			preaction={
				draw, line cap=round, line join=round,
				opacity=0.4, line width=15pt, #1
			}
		},
		glow/.default=green,
		transparency group,
		level 1/.style={sibling distance=40mm, level distance=20mm},
		level 2/.style={sibling distance=18mm},
		level 3/.style={sibling distance=14mm},
		every node/.style={circle, draw=black, line width=3pt, minimum size=1cm},
		norm/.style={edge from parent/.style={solid, black, thick, draw}},
		fan/.style={edge from parent/.style={draw,line width=5pt,-,white!0} }
		]
        \node[rverdenode] { \Huge{3}} 
        child[norm]{node[circle, draw, ultra thick, rorangenode, glow ]{ \Huge{3} } }
        child[norm]{ node[circle, draw, ultra thick, rorangenode, glow]{ \Huge{1} } 
                 			child[norm]{node[rverdenode]{ \Huge{1} } }           
        }             
        child[norm] {node[circle, draw, ultra thick,   rorangenode, glow=yellow ] { \Huge{2} }
			child[norm] {node [rverdenode]{\Huge{2} }}
			child[norm] {node[rlilanode] {\Huge{3}}}
			child[norm] {node[rlilanode] {\Huge{2}} 
			    child[norm] {node[rorangenode] {\Huge{1} }}
			   } 
			} 
        child[norm]{ node[circle, draw, ultra thick,  rlilanode, glow=pink]{ \Huge{1} } }    ;
	\end{tikzpicture}}
    	\caption{
    	Multitype allele tree associated with the colored genealogical tree in Figure~\ref{fig:stoppingline}.
		The green-shaded nodes in level one are the children of the mother with the highest number of mutant children. The remaining two nodes descend from different mothers who each have the same number of mutant children;   therefore, we order them in ascending order by type.
	}
	\label{fig:multitype allele tree}
\end{figure}

\begin{remark} \label{rmk:alleleforest}
    In the general case, the initial population is given by $\mathbf{a} \in \mathbb{N}_0^d$. 
    We can generalize the construction above as follows: for each type $j \in [d]$, we collapse
    all individuals of type $j$ in the $0$-th allelic generation into a node $\varnothing_j$, and define  
    \[
        \mathscr{A}_{\varnothing_j} = (\mathcal{A}_{\varnothing_j}, j, \mathbf{d}_{\varnothing_j})
    \]
    in the multitype allele tree. We then construct the descendants of $\mathscr{A}_{\varnothing_j}$ using the recursive procedure described above. 
    The resulting object is a \emph{multitype allele forest} with roots 
    $\mathscr{A}_{\varnothing_1}, \ldots, \mathscr{A}_{\varnothing_d}$, so that each tree in the forest corresponds to a different initial type.
\end{remark}

\section{Asymptotic behavior}\label{sec: limitresults}
In this section, we analyze the asymptotic behavior of the allelic structure of a population with mother-dependent neutral mutations.

We begin by proving the convergence of the clone-mutant Markov chain. Recall that Lemma \ref{le:TyMMarkov} shows that this is a homogeneous Markov chain with transition probabilities given by the random vector $ (\mathbf{T}_0, \mathbf{M}_1) $.The limiting distribution involves inverse Gaussian distributions, which we denote by $\inverseGauss \left(  \mu , \lambda \right)$ with parameters $\mu , \lambda > 0$.

Let
$ \VEC{Y}^{(n)} = (\VEC{Y}^{(n)}_k, k \in \mathbb{N}_0) $ be a mother-dependent neutral mutation model   with law $\mathbb{P}_{\mathbf{a}^{(n)}}^{r(n)}$, as defined in~\eqref{eq:sequenceY}, for each $n \in \mathbb{N}_0$ 

Throughout this section, we assume Hypotheses (H\ref{hyp:start}), (H\ref{hyp:mutation}), and (H\ref{hyp:size}) from Subsection~\ref{subsec:AlleleTreesMainResult} in the Introduction.

We begin with the convergence of the transition probabilities of the clone-mutant Markov chain, starting from a large single-type population. In this case, it suffices to assume Hypotheses (H\ref{hyp:mutation}) and (H\ref{hyp:size}), and assume an initial population consists of  $n$ of type $j \in [d]$.  The proof of the following lemma is in Subsection~\ref{proof: step1rw}. 

\begin{lemma}\label{lem: step1rw}
Assume the regime of Hypothesis (H\ref{hyp:mutation}) and (H\ref{hyp:size}) on the sequence of mother-dependent neutral mutation models $ \left( \mathbf{Y}^{(n)}, n \in \mathbb{N} \right) $. 
For each  $n \in \mathbb{N}$, suppose  $\mathbf{Y}^{(n)}$ follows the measure $\mathbb{P}_ { n\VEC{e}_j }^{r(n)}$, and let
 $  \left( (\VEC{T}_k^{(n)}, \VEC{M}^{(n)}_{k+1}) \, ; \, k \in \mathbb{N_0} \right) $ be the associated clone-mutant Markov chain. Then we have the following.
	\begin{enumerate}[(i)]
		\item  
		Let
		\[
		\theta_1 =\inf\{ t\geq0: \sigma \VEC{B}_t(j) + ct = 1 \}\sim \inverseGauss \left(\tfrac{1}{c}, \tfrac{1}{\sigma^2}\right)
		\]
		be the first hitting time of level $1$ by a Brownian motion with drift $c > 0$.
		Then,
		\begin{equation*}
			\left( n^{-2}T^{(n)}_0,n^{-1}\VEC{M}^{(n)}_{1} \right)   
			\overset{\mathcal{D}}{\Longrightarrow}
			\left(\theta_{1}, \sum\limits_{i\in [d]}\tfrac{c}{d-1}\theta_{1}\VEC{e}_{i}\mathbbm{1}_{\{i \neq j\}}\right), \qquad \text{ as } n \rightarrow \infty.
		\end{equation*}
		For each $i \neq j$, the   $i$-th component satisfies  $ \tfrac{c}{d-1}\theta_{1} \sim  \inverseGauss \left(\tfrac{1}{d-1}, \tfrac{c}{\sigma^2(d-1)}\right) $.
		
		\item 
		The joint moment generating function of $(T_0^{(n)}, \VEC{M}^{(n)}_1)$ satisfies
		\begin{equation*}
			\lim\limits_{n \rightarrow \infty} \E^{r(n)}_{n\VEC{e}_j}\left[e^{ \VEC{q}(j)n^{-2}T_0^{(n)}} e^{\langle \VEC{q},n^{-1}\VEC{M}_1^{(n)} \rangle}\right]
			= e^{\boldsymbol{\kappa}_j (\VEC{q}) }, 
		\end{equation*}
		with
		\begin{equation}\label{eq: defkappa}
    	\boldsymbol{\kappa}_j (\VEC{q})  
            = \kappa\left(\VEC{q}(j) + \sum\limits_{i\neq j}\frac{c}{d-1}\VEC{q}(i)\right) 
            \coloneqq
			\tfrac{c}{\sigma^2}\left(1-\sqrt{1-\tfrac{2\sigma^2}{c^2}\left(\VEC{q}(j) + \tfrac{c}{d-1}\sum\limits_{i\neq j }\VEC{q}(i)\right)}\right)
		\end{equation}
		for each $\VEC{q} \in \R^d$ such that $\VEC{q}(j) + \tfrac{c}{d-1}\sum\limits_{i\neq j }\VEC{q}(i)<\tfrac{c^2}{2\sigma^2}$. Here, $\kappa$ is the cumulant of  $\theta_1$ above, which follows an inverse Gaussian distribution.
	\end{enumerate}
\end{lemma}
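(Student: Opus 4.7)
My plan is to prove part (ii) first by an analytic expansion of the fixed-point equation for $\varphi_j$, and then obtain part (i) as an immediate consequence via Curtiss' theorem on convergence of moment generating functions.

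For (ii), the starting point is Proposition~\ref{Prop: pgf T_0Mvec}(ii), which via the branching property gives
\[
\mathbb{E}_{n\VEC{e}_j}^{r(n)}\!\left[e^{\VEC{q}(j) n^{-2} T_0^{(n)}} e^{\langle \VEC{q},\, n^{-1}\VEC{M}_1^{(n)}\rangle}\right] = \varphi_j\!\bigl(e^{\VEC{q}(j)/n^2},\,(e^{\VEC{q}(i)/n})_{i\neq j}\bigr)^{\,n},
\]
together with the fixed-point equation $\varphi_j = x\,g_j(\varphi_j,\VEC{\bar{y}}^j)$ from~\eqref{eq: pgfTM_i}. The multinomial structure of~\eqref{eq: mothdepproba} lets me write $g_j(x,\VEC{\bar{y}}^j) = G\!\bigl(x(1-r) + \tfrac{r}{d-1}\sum_{i\neq j}\VEC{y}(i)\bigr)$, where $G$ is the generating function of the total-offspring law $\mu$. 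By hypothesis (H\ref{hyp:size}), $G(1) = G'(1) = 1$ and $G''(1) = \sigma^2$, so $G(1-u) = 1 - u + \tfrac{\sigma^2}{2}u^2 + o(u^2)$.

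Next I substitute $r(n) = c/n + o(1/n)$ and the Taylor expansions $e^{\VEC{q}(j)/n^2} = 1 + \VEC{q}(j)/n^2 + O(n^{-4})$, $e^{\VEC{q}(i)/n} = 1 + \VEC{q}(i)/n + O(n^{-2})$ into the fixed-point equation, and set $\varphi_j = 1 - \epsilon_n$ with the ansatz $\epsilon_n = \alpha/n + o(1/n)$. Collecting the $n^{-2}$ terms, the fixed-point equation reduces to the quadratic
\[
\tfrac{\sigma^2}{2}\alpha^2 + c\alpha + Q = 0, \qquad Q \coloneqq \VEC{q}(j) + \tfrac{c}{d-1}\sum_{i\neq j}\VEC{q}(i).
\]
The root that vanishes as $\VEC{q}\to 0$ is $\alpha = \sigma^{-2}\bigl(-c + \sqrt{c^2 - 2\sigma^2 Q}\bigr)$. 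Since $n\log\varphi_j = -n\epsilon_n + O(n\epsilon_n^2) \to -\alpha$, this is precisely the formula for $\boldsymbol{\kappa}_j(\VEC{q})$ in~\eqref{eq: defkappa}, establishing (ii).

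For (i), I identify the limiting quantity $e^{\boldsymbol{\kappa}_j(\VEC{q})}$ as the joint moment generating function of $\bigl(\theta_1,\,\sum_{i\neq j}\tfrac{c}{d-1}\theta_1\VEC{e}_i\bigr)$: since every component other than the first is a deterministic multiple of $\theta_1$, the MGF collapses to $\mathbb{E}[e^{Q\theta_1}]$, which is the classical inverse-Gaussian Laplace transform and equals $e^{\kappa(Q)}$. Convergence of the joint MGF in a neighborhood of the origin then yields the claimed convergence in distribution by Curtiss' theorem; the marginal $\tfrac{c}{d-1}\theta_1 \sim \inverseGauss(\tfrac{1}{d-1},\tfrac{c}{\sigma^2(d-1)})$ is just the scaling property of the inverse Gaussian family. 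The main obstacle is controlling the remainder terms uniformly in $\VEC{q}$ on compact sets: I need to verify via a standard analytic implicit-function argument applied to the equation $\varphi_j - e^{\VEC{q}(j)/n^2} G(\cdots) = 0$ that a unique small-$\epsilon_n$ solution exists and that the higher-order contributions from $G(1-u)$ and from the exponentials are indeed $o(1/n^2)$. The rest reduces to bookkeeping.
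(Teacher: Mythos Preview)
Your proposal is correct but takes a genuinely different route from the paper. The paper proves (i) first by a probabilistic argument: it appeals to the random-walk representation of $(\VEC{T}_0,\VEC{M}_1)$ from Lemma~\ref{lem: simL3Ber}, computes the mean and variance of the rescaled walk $n^{-1}\VEC{S}^{(n,j)}_{\lfloor n^2 t\rfloor}$, invokes a functional CLT to obtain convergence to $(1+\sigma \VEC{B}_t(j)-ct)\VEC{e}_j + \tfrac{ct}{d-1}\sum_{i\neq j}\VEC{e}_i$, and then reads off (i) from continuity of the first-passage time. Part (ii) is then obtained \emph{a posteriori} from (i) via L\'evy's continuity theorem. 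You reverse the order: you work directly with the fixed-point equation~\eqref{eq: pgfTM_i}, exploit the multinomial structure to collapse $g_j$ to the one-dimensional generating function $G$, expand to second order, and extract $\boldsymbol{\kappa}_j$ as the relevant root of a quadratic; (i) then follows by Curtiss' theorem.

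Each approach has its advantages. The paper's random-walk proof gives a transparent probabilistic picture of why the inverse Gaussian arises (as a Brownian first-passage time), and the argument extends with no extra work to the multi-type initial data of Proposition~\ref{prop: stepkrw}. Your analytic route is self-contained and avoids Lemma~\ref{lem: simL3Ber} and the functional CLT machinery entirely; it also makes the role of the second-moment hypothesis (H\ref{hyp:size}) explicit through the coefficient $G''(1)=\sigma^2$. The price is the uniformity issue you flag at the end: to promote the formal expansion $n\epsilon_n\to\alpha$ to a rigorous limit you need an implicit-function or Rouch\'e-type argument to locate the root of $\varphi_j - e^{\VEC{q}(j)/n^2}G(\cdot)=0$ uniformly for $\VEC{q}$ in compacts, which is routine but does require writing out.
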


We now extend this convergence result to the entire clone-mutant Markov chain starting from a general initial population. The limiting object is a continuous-state Markov chain whose transition probabilities are characterized via moment generating functions. The proof of Proposition~\ref{prop: stepkrw} can be found in Subsection~\ref{proof: stepkrw}.

\begin{proposition}\label{prop: stepkrw}
Assume  (H\ref{hyp:start}), (H\ref{hyp:mutation}) and (H\ref{hyp:size}), and consider the law $\mathbb{P}_ { \VEC{a}^{(n)} }^{r(n)}$. Then
\begin{equation*}
	\left( n^{-2}\VEC{T}^{(n)}_0,n^{-1}\VEC{M}^{(n)}_{1} \right)   
	\overset{\mathcal{D}}{\Longrightarrow}
	\left(\boldsymbol{\theta}_{\VEC{y}}, \boldsymbol{\Sigma} \right) , \qquad \text{ as } n \rightarrow \infty,
\end{equation*}
where $\boldsymbol{\theta}_{\VEC{y}}$ is a vector of $d$ independent random variables such that $\boldsymbol{\theta}_{\VEC{y}}(j) \sim  \inverseGauss \left(\tfrac{\VEC{y}(j)}{c}, \tfrac{\VEC{y}(j)^2}{\sigma^2}\right)$, and $\boldsymbol{\Sigma}(j)=\tfrac{c}{d-1}\sum\limits_{i\neq j}\boldsymbol{\theta}_{\VEC{y}}(i)$ for each $j\in [d]$.
Moreover, the full clone-mutant Markov chain satisfies
\begin{equation} \label{eq:Prop2Bertoin10}
\left(  \left( n^{-2 }\mathbf{T}^{(n)}_k,n^{-1} \mathbf{M}^{(n)}_{k+1} \right)  ; k\in\N_0 \right)  \Longrightarrow ( ( \VEC{W}_k, \VEC{Z}_k),  k\in\N_0 ),  \qquad \text{ as } n \rightarrow \infty,
\end{equation}
where $( ( \VEC{W}_k, \VEC{Z}_k),  k\in\N_0 )$ is a continuous-state homogeneous Markov chain whose reproduction measure is given by:
\begin{equation*}
	\begin{split}
		\lim\limits_{n \rightarrow \infty} \E^{r(n)}_{\VEC{a}^{(n)}}\left[e^{\langle\VEC{x},n^{-2}\VEC{T}_0\rangle} e^{\langle \VEC{z},n^{-1}\VEC{M}_1 \rangle}\right]
		=  e^{\langle \VEC{y},\boldsymbol{\kappa}(\VEC{x},\VEC{z})\rangle}
	\end{split},
\end{equation*}
where the function $\kappa$ is defined componentwise by:
\begin{equation*}
\boldsymbol{\kappa}_j(\VEC{x},\VEC{z}) \coloneqq \kappa(\VEC{x}(j) + \tfrac{c}{d-1}\sum\limits_{i\neq j}\VEC{z}(i)), \qquad j \in [d], 	    
\end{equation*}
valid for all $\VEC{x},\VEC{z} \in \R^d$ satisfying the condition
\[
	\VEC{x}(j)+\tfrac{c}{d-1}\sum\limits_{i\neq j}\VEC{z}(i) <\tfrac{c^2}{2\sigma^2} \text{ for all }j \in [d].
\]
Here, $\kappa$ is the cumulant from~\eqref{eq: defkappa}.
Indeed, 
\begin{equation*}
	\begin{split}
		\E^{c}_{\VEC{y}}\left[e^{\langle\VEC{x},\VEC{W}_{k+1}\rangle} e^{\langle \VEC{z},\VEC{Z}_{k+1} \rangle}|  \VEC{W}_{k}=\VEC{u}, \VEC{Z}_{k}=\VEC{v}\right]
		=  e^{\langle \VEC{v},\boldsymbol{\kappa}(\VEC{x},\VEC{z})\rangle}
	\end{split}.
\end{equation*}
\end{proposition}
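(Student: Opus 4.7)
The plan is to prove the one-step convergence first, and then bootstrap to the full Markov chain via the homogeneous Markov property from Lemma~\ref{le:TyMMarkov}. The one-step argument is a direct moment generating function computation using the factorization from Proposition~\ref{Prop: pgf T_0Mvec}(ii), and the Markov chain extension proceeds by induction on $k$ via successive conditioning.

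For the one-step convergence, the key identity from Proposition~\ref{Prop: pgf T_0Mvec}(ii) gives, for $\VEC{x},\VEC{z}\in\R^d$ in the admissible region,
\[
\E^{r(n)}_{\VEC{a}^{(n)}}\!\left[e^{\langle \VEC{x},\, n^{-2}\VEC{T}^{(n)}_0\rangle + \langle \VEC{z},\, n^{-1}\VEC{M}^{(n)}_1\rangle}\right]
= \prod_{j=1}^d \varphi_j\!\left(e^{\VEC{x}(j)/n^2},\,(e^{\VEC{z}(i)/n})_{i\neq j}\right)^{\VEC{a}^{(n)}(j)}.
\]
Lemma~\ref{lem: step1rw}(ii) states that $n\log \varphi_j(\cdot) \to \boldsymbol{\kappa}_j(\VEC{x},\VEC{z})$, and Hypothesis (H\ref{hyp:start}) gives $\VEC{a}^{(n)}(j)/n \to \VEC{y}(j)$, so
\[
\VEC{a}^{(n)}(j)\log \varphi_j(\cdot) = \frac{\VEC{a}^{(n)}(j)}{n}\cdot n\log \varphi_j(\cdot) \longrightarrow \VEC{y}(j)\,\boldsymbol{\kappa}_j(\VEC{x},\VEC{z}).
\]
Summing over $j$ and exponentiating yields the announced limit $\exp\langle \VEC{y},\boldsymbol{\kappa}(\VEC{x},\VEC{z})\rangle$. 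Since convergence holds on a neighborhood of the origin, Curtiss' theorem upgrades it to convergence in distribution of $(n^{-2}\VEC{T}^{(n)}_0,n^{-1}\VEC{M}^{(n)}_1)$ to a pair $(\boldsymbol{\theta}_{\VEC{y}},\boldsymbol{\Sigma})$. To identify it, rewrite the limiting MGF as $\prod_{j} \exp\!\big( \VEC{y}(j)\,\kappa(\VEC{x}(j) + \tfrac{c}{d-1}\sum_{i\neq j}\VEC{z}(i))\big)$; setting $\VEC{z}=0$ identifies the coordinates of $\boldsymbol{\theta}_{\VEC{y}}$ as independent with cumulants $\VEC{y}(j)\kappa$, which matches the inverse Gaussian cumulant of $\inverseGauss(\VEC{y}(j)/c,\VEC{y}(j)^2/\sigma^2)$; the affine dependence on $\VEC{z}$ in the exponent then forces $\boldsymbol{\Sigma}(j)=\tfrac{c}{d-1}\sum_{i\neq j}\boldsymbol{\theta}_{\VEC{y}}(i)$.

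For the full chain, I would compute the joint moment generating function of $\big((n^{-2}\VEC{T}^{(n)}_\ell,n^{-1}\VEC{M}^{(n)}_{\ell+1}): 0\leq \ell \leq k\big)$ by induction on $k$. Conditioning on the $\sigma$-algebra $\mathcal{F}^{(n)}_k$ generated by the truncated chain through step $k$, and applying Lemma~\ref{le:TyMMarkov} together with Proposition~\ref{Prop: pgf T_0Mvec}(ii), gives
\[
\E\!\left[e^{\langle \VEC{x}_k,\,n^{-2}\VEC{T}^{(n)}_k\rangle + \langle \VEC{z}_k,\,n^{-1}\VEC{M}^{(n)}_{k+1}\rangle} \bigm| \mathcal{F}^{(n)}_k\right]
= \prod_j \varphi_j\!\left(e^{\VEC{x}_k(j)/n^2},\,(e^{\VEC{z}_k(i)/n})_{i\neq j}\right)^{\VEC{M}^{(n)}_k(j)},
\]
which, after taking logarithms, equals $\exp\!\big\langle n^{-1}\VEC{M}^{(n)}_k,\, n\log \VEC{\varphi}(\cdots)\big\rangle$. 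Since $n\log\varphi_j\to\boldsymbol{\kappa}_j(\VEC{x}_k,\VEC{z}_k)$ and $n^{-1}\VEC{M}^{(n)}_k\to \VEC{Z}_k$ in distribution by the induction hypothesis, this conditional MGF tends to $\exp\langle \VEC{Z}_k,\boldsymbol{\kappa}(\VEC{x}_k,\VEC{z}_k)\rangle$, which is precisely the transition kernel claimed in the statement. Taking outer expectations and combining with the induction hypothesis for the remaining factor indexed by $\ell<k$ produces the joint MGF limit and hence the finite-dimensional convergence~\eqref{eq:Prop2Bertoin10}.

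The main technical obstacle is justifying the exchange of limits inside the conditional expectation: one needs the asymptotic $n\log\varphi_j(e^{\VEC{x}_k(j)/n^2},(e^{\VEC{z}_k(i)/n})_{i\neq j}) = \boldsymbol{\kappa}_j(\VEC{x}_k,\VEC{z}_k) + o(1)$ to be uniform, and control on the random weights $\VEC{M}^{(n)}_k(j)/n$ appearing in the exponent. This can be arranged by restricting $(\VEC{x}_k,\VEC{z}_k)$ to a compact neighborhood of the origin inside the analyticity domain and invoking bounded convergence, together with the tightness of $(n^{-1}\VEC{M}^{(n)}_k)_n$ inherited from the inductive step.
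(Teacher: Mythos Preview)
Your proposal is correct, but it follows a different route from the paper's argument for the one-step convergence. The paper does not invoke the factorization of Proposition~\ref{Prop: pgf T_0Mvec}(ii) together with Lemma~\ref{lem: step1rw}(ii); instead it \emph{repeats} the random-walk construction of Subsection~\ref{subsec: rw} with starting point $\VEC{a}^{(n)}$ in place of $n\VEC{e}_j$, shows the Donsker-type convergence $n^{-1}\VEC{S}^{(n,i)}_{\lfloor n^2 t\rfloor}\Rightarrow (\VEC{y}(i)+\sigma\VEC{B}_t(i)-ct)\VEC{e}_i + \tfrac{ct}{d-1}\sum_{j\neq i}\VEC{e}_j$, deduces that the hitting times $n^{-2}\boldsymbol{\tau}^{(n)}_0(i)$ converge to $\boldsymbol{\theta}_{\VEC{y}}(i)\sim\inverseGauss(\VEC{y}(i)/c,\VEC{y}(i)^2/\sigma^2)$, and then appeals to Lemma~\ref{lem: simL3Ber} to transport this to $(\VEC{T}_0^{(n)},\VEC{M}_1^{(n)})$. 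Your argument is more economical: it reuses Lemma~\ref{lem: step1rw}(ii) as a black box (extracting $n\log\varphi_j\to\boldsymbol{\kappa}_j$) and combines it with the branching factorization and (H\ref{hyp:start}), avoiding any re-derivation of the functional limit theorem. The paper's route has the minor advantage of giving the probabilistic description of the limit $(\boldsymbol{\theta}_{\VEC{y}},\boldsymbol{\Sigma})$ directly from the hitting-time picture, whereas you recover it a posteriori from the form of the limiting MGF. For the full chain, both proofs invoke Lemma~\ref{le:TyMMarkov} in the same way; the paper's treatment is in fact terser than yours and does not spell out the induction or the uniform-in-$n$ control you flag as a technical point.
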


Once we have the limit behavior of the mutant-clone Markov chain, we have everything to prove our main result  Theorem~\ref{thm:main}.
We finish by formally describing the limit object defined as a tree-indexed CSBP.

\begin{definition} \label{def:treeCSBP}
Let $a > 0$ and let $\nu$ be a measure on $ ( 0, \infty)$ and satisfying $ \int ( 1 \wedge y ) \nu (dy) < \infty $.
A tree-indexed CSBP with reproduction measure $\nu$ and initial population size $a$ is a process $ ( \mathcal{Z}_u  : u \in \mathbb{U})  $ indexed by the Ulam-Harris tree $\mathbb{U}$ and taking values in the non-negative reals $\mathbb{R}_{\geq 0}$.
The distribution of this process is characterized inductively on its levels, as follows,
\begin{enumerate}[(i)]
    \item $\mathcal{Z}_{\varnothing}  = a$ almost surely;
    \item let every $k \in \mathbb{Z}_+$. Conditionally on the previous levels $ (\mathcal{Z}_v : v \in \mathbb{U}, \vert v \vert \leq k) $,
    the sequences $(\mathcal{Z}_{uj})_{j \in \mathbb{N}}$ defining the offspring of vertices $u \in \mathbb{U}$ at level $\vert u \vert = k$ are independent, and each sequence $(\mathcal{Z}_{uj})_{j \in \mathbb{N}}$ follows the distribution of the atoms of a Poisson random measure on $(0, \infty)$ with intensity $\mathcal{Z}_u \nu$, where these atoms are repeated according to their multiplicity, ranked in decreasing order, and completed by an infinite sequence of zeros if the Poisson measure is finite.
\end{enumerate}
\end{definition}

As stated in the main theorem, the limiting object is a tree-indexed CSBP with reproduction measure 
\[
	\nu(dz)= \dfrac{c}{\sqrt{2\pi\sigma^2z^3}}\exp\left\{-\frac{c^2y}{2\sigma^2}\right\}dz, \qquad z>0,
\]
and random initial population $\theta_1 \sim \inverseGauss \left( \frac{1}{c} , \frac{1}{\sigma^2} \right)$. 

\begin{remark} Note that Theorem~\ref{thm:main} assumes Hypotheses (H\ref{hyp:mutation}) and (H\ref{hyp:size}), and considers an initial population consisting of individuals of a single type.
In the more general case (when the initial population includes a large number of individuals possibly of all available types) we additionally require Hypothesis (H\ref{hyp:start}). In this setting, we can construct the associated multitype allele tree as described in Remark~\ref{rmk:alleleforest}.
Theorem~\ref{thm:main} extends to this broader setting thanks to the branching property, and the limiting object becomes a forest of tree-indexed CSBP processes. \end{remark}

\section{Proofs} \label{sec:proofs}

In this section, we present the proofs of Proposition~\ref{Prop: pgf T_0Mvec}, Corollary~\ref{cor:ext}, Lemma~\ref{lem: simL3Ber}, Lemma~\ref{lem: step1rw}, Proposition~\ref{prop: stepkrw} and Theorem~\ref{thm:main}. 

We briefly recall some definitions and introduce notation that will be used throughout the proofs.

Recall that $\boldsymbol{\xi}^{i}$ is a random vector whose distribution is given by the measure $\boldsymbol{\mu}_i$ defined in~\eqref{eq: mothdepproba}. 

In~\eqref{eq:multipopsize}, we define the mother-dependent process neutral mutations model 
$\VEC{Y} \coloneqq(\VEC{Y}_k, k \in \mathbb{N}_0)$ in terms of the families independent random vectors $(\boldsymbol{\xi}^i_{k,\ell})_{k \in \mathbb{N}_0 , \ell  \in \mathbb{N} }$ for $i \in [d]$. 
Recall as well that each $(\boldsymbol{\xi}^i_{k,\ell})_{k \in \mathbb{N}_0 , \ell  \in \mathbb{N} }$ is an independent copy of the random vector $ \boldsymbol{\xi}^i$ defined in~\eqref{eq: mothdepproba}. 
In particular, 
with the notation introduced in~\eqref{eq:multipopsize}, $ \boldsymbol{\xi}^i_{1,1}(j)$ denotes the number of children of type~$j$ produced by the first individual of type~$i$ at level~$1$.

Given $n \in \mathbb{N}$, whenever we consider an element $\mathbf{Y}^{(n)}$ of the sequence of processes in~\eqref{eq:sequenceY}, we have
\[
    \VEC{Y}^{(n)}_{0}(j)  =  \VEC{a}^{(n)}(j) \qquad \text{and} \qquad  \VEC{Y}^{(n)}_{k+1}(j)  =  \sum_{i=1}^{d} \sum_{\ell=1}^{ \VEC{Y}^{(n)}_k (i)} \boldsymbol{\xi}^{(n,i)}_{k+1,\ell}{(j)} , \qquad  \text{ for all }  k \in \N_0,
\]
where the families $(\boldsymbol{\xi}^{(n,i)}_{k,\ell})_{k \in \mathbb{N}_0 , \ell  \in \mathbb{N} }$ are independent, and each $(\boldsymbol{\xi}^{(n,i)}_{k,\ell})_{k \in \mathbb{N}_0 , \ell  \in \mathbb{N} }$ is an independent copy of $ \boldsymbol{\xi}^i$

In~\eqref{eq:numberMutant}, we defined the total number of mutant children of an individual $x$. More generally, for an individual of type~$i$, we define
\[
    \xi^{(m,i)}
    \coloneqq    
    \sum\limits_{j \neq i}
	\boldsymbol{\xi}^{i} (j) , \qquad  
    \xi^{(c,i)}
    \coloneqq    
	\boldsymbol\xi^{i} (i).
\]
to denote the total number of mutant and clone children, respectively. Note that~\eqref{eq:offspringdist1} implies that for all $i \in [d]$,
$  \xi^{(+)} =\xi^{(m,i)} + \xi^{(c,i)} $.


\subsection{Proof Proposition \ref{Prop: pgf T_0Mvec} }\label{proof: pgf T_0Mvec}

We begin with the proof of (i).

Starting from~\eqref{eq:MGFone}, observe that
\begin{equation}\label{eq:torre1}
\varphi_{i}( x, \VEC{\bar{y}}^i) 
= \mathbb{E}_{\boldsymbol{e}_i}\left[  x^{ \VEC{T}_0(i) } \prod_{j\neq i}    \VEC{y}(j)^{ \VEC{M}_1(j) } \right] 
=\sum\limits_{\VEC{v} \in \mathbb{N}_0^d} \boldsymbol{\mu}_i(\VEC{v}) \mathbb{E}_{\boldsymbol{e}_i}\left[ x^{\VEC{T}_0(i)} \prod_{j\neq i}   y_j^{ \VEC{M}_1(j) }  \Big| \boldsymbol{\xi}_{1,1}^{(i)} = \VEC{v} \right].
\end{equation}

It remains to compute the expectation in the last term. By the branching property, we have 
\begin{align*}
\mathbb{E}_{\boldsymbol{e}_i}\left[  x^{\VEC{T}_0(i)} \prod_{j\neq i}   \VEC{y}(j)^{ \VEC{M}_1(j) }  \Big| \boldsymbol{\xi}_{1,1}^{(i)} = \VEC{v} \right]
&  = \mathbb{E}_{\boldsymbol{e}_i}\left[  {x}^{1 +\sum\limits_{r=1}^{\VEC{v}(i)}\VEC{T}_{0,r}(i)} \prod_{j\neq i}    \VEC{y}(j)^{\VEC{v}(j)+\sum\limits_{r=1}^{\VEC{v}(i)} \VEC{M}_{1,r}(j)}  \right] \\
&  = {x} \prod_{j\neq i}    \VEC{y}(j)^{ \VEC{v}(j)}  \left( \mathbb{E}_{  \boldsymbol{e}_i}\left[ \VEC{x}(i)^{\VEC{T}_0(i)} \prod_{j\neq i}    \VEC{y}(j)^{ \VEC{M}_1 (j) }  \right] \right)^{\VEC{v}(i)} \\
&  = {x} \left(\varphi_i( {x}, \VEC{\bar{y}}^i )\right)^{\VEC{v}(i)} \prod_{j\neq i}  \VEC{y}(j)^{ \VEC{v}(j)}.
\end{align*}
We conclude the proof of (i) by substituting the last expression into~\eqref{eq:torre1} and comparing with the definition in~\eqref{eq: g_i}. We thus obtain~\eqref{eq: pgfTM_i}. 

We continue with the proof of (ii).
Observe that~\eqref{eq: pgfTM_i} can be rewritten as
\begin{equation} \label{eq: eqvarphi}
    \dfrac{ {g}_{i}(z_i, \VEC{\bar{y}}^i )}{z_i} =\dfrac{1}{x} .
\end{equation}
We now obtain the law of $(\VEC{T}_0(i),\VEC{M}_1)$ using the Lagrange inversion formula (see, for instance,~\cite[Theorem 6.2]{pitman2006combinatorial}). Specifically, we use the fact that for each fixed  $\VEC{\bar{y}}^i \in (0,1]^d$ and sufficiently small ${x} \in (0,1]$, Equation~\ref{eq: eqvarphi} has a unique solution $z_i=\varphi_i(x, \VEC{\bar{y}}^i )$. 

Note that (H\ref{hyp:size}) implies $ \mathbb{E} \left[ \xi^{(c)}(i) \right] \leq 1$, and the definition of $\boldsymbol{\mu}_i$ in~\eqref{eq: mothdepproba} ensures that $P(  \boldsymbol{\xi}^{i}(i) = 1)< 1$.
We now prove by contradiction that for all  $\VEC{\bar{y}}^i \in (0,1]^d$, we have $g_{i}(0, \VEC{\bar{y}}^i)>0$, and therefore
\[  
    \lim_{z_i\rightarrow 0} \dfrac{ {g}_{i}(z_i, \VEC{\bar{y}}^i )}{z_i} = \infty
\] 
Indeed, suppose that there exists $\VEC{\bar{y}} \in (0,1]^d$ such that  $g_{i}(0, \VEC{\bar{y}}^i)=0$. Note that
\[
    \E[ \boldsymbol{\xi}^{i}(i) ] =\sum_{k\in\N} k \beta_k,
\]
where
\[
\beta_k \coloneqq \sum_{\VEC{v} \in \bar{\N}^i_0 } \mathbb{P}_{\VEC{e}_i}\left[ ( \boldsymbol{\xi}^{i}(1) ,\dots,\boldsymbol{\xi}^{i}(d))=(\VEC{v}(1),\dots,\VEC{v}(i-1), k, \VEC{v}(i+1) \dots,\VEC{v}(d))\right].
\]
Given that $g_{i}(0, \VEC{\bar{y}}^i )=0$, we must have $\beta_0=0$, and thus 
\[
    \E[ \boldsymbol{\xi}^{i}(i) ]\geq \beta_1+2(1-\beta_1)=2-\beta_1 ,
\]
which contradicts the hypothesis $\P(\boldsymbol{\xi}^{(c)}(i) = 1 )<1$, since it implies $\beta_1<1$.

Now consider the function $z_i \rightarrow  \dfrac{ {g}_{i}(z_i, \VEC{\bar{y}}^i )}{z_i} $. It is strictly decreasing with a strictly negative derivative for $z_i>0$ sufficiently small. As a consequence, for each fixed $\VEC{\bar{y}}^i \in (0,1]^d$ and $x>0$ small enough, the equation has a unique solution.

We now explicitly derive the law of $(\VEC{T}_0(i),\VEC{M}_1)$ under $\mathbb{P}_{\VEC{a}(i)\VEC{e}_i}$ from its generating function $\varphi_i^\VEC{a}$ using the Lagrange inversion formula. 
For each $x \in (0,1)$ and $\VEC{\bar{y}}^i \in [0,1]^d$, define the function $x \mapsto g_i(x,\VEC{\bar{y}}^i)$ as
\[
	{g}_i(x, \VEC{\bar{y}}^i)=\sum\limits_{k=0}^{\infty}a_{i,k}(\VEC{\bar{y}}^i)x^{k}, \qquad \text{where } a_{i,k}(\VEC{y}^i)= \sum\limits_{\boldsymbol{\ell} \in \bar{\N}_0^i} \boldsymbol{\mu}_i(\boldsymbol{\ell} + k\VEC{e_i})\prod\limits_{j\neq i}  \VEC{y}(j)^{\boldsymbol{\ell}(j)}.
\]

According to the Lagrange inversion formula, for $ \VEC{\bar{y}}^i \in [0, 1]^d$ fixed and $x > 0$ sufficiently small, the $\VEC{a}(i)$-th power of the solution to~\eqref{eq: eqvarphi} can be expressed as
\[
	z_i^{\VEC{a}(i)}=\varphi_i^{\VEC{a}(i)}(x, \VEC{\bar{y}}^i)= \sum\limits_{n=1}^{\infty} \frac{\VEC{a}(i)}{n}\alpha^{*n}_{i}(n-\VEC{a}(i))x^n,
\]
where $\alpha^{*n}_i$ denotes the $n$-th convolution power of the finite measure $\alpha_i = (a_{i,k}(\VEC{\bar{y}}^i) : k \in \mathbb{N}_0)$. 
Since $x \mapsto g_i(x,\VEC{\bar{y}}^i)$ is the generating function of $\alpha_i$, then the moment generating function of $\alpha^{*n}_i$  is given by
\[
	x \mapsto g_i^n(x,  \VEC{\bar{y}}^i)=
 	\sum\limits_{k=0}^{\infty}x^k \sum\limits_{\boldsymbol{\ell} \in \bar{\N}_0^i }\boldsymbol{\mu}_i^{*n}(\boldsymbol{\ell} + k\VEC{e}_i)\prod\limits_{j\neq i} \VEC{y}(j)^{\boldsymbol{\ell}(j)} .
\]
Hence, 
\[
	\alpha^{*n}_i(k)= \sum\limits_{\boldsymbol{\ell} \in\bar{\N}_0^i}\boldsymbol{\mu}_i^{*n}(\boldsymbol{\ell} + k\VEC{e}_i)\prod\limits_{j\neq i} \VEC{y}(j)^{\boldsymbol{\ell}(j)},
\]
which yields
\begin{equation}\label{eq:pgf TiMexpl}
\varphi_i^{\VEC{a}(i)}(x,\VEC{\bar y}^i)= \sum\limits_{k=\VEC{a}(i)}^{\infty} \frac{\VEC{a}(i)}{k}\sum\limits_{\VEC{v} \in\bar{\N}_0^i}\boldsymbol{\mu}_i^{*k}(\VEC{v} + (k-\VEC{a}(i))\VEC{e}_i)\prod\limits_{j\neq i} \VEC{y}(j)^{\VEC{v}(j)}x^k.
\end{equation}
Therefore, 
\[
\varphi^{\VEC{a}(i)}_i(x,\VEC{\bar{y}}^i)= \mathbb{E}_{\VEC{a}(i)\VEC{e}_i}\left[ x^{ \VEC{T}_0(i) } \prod_{j\neq i}   \VEC{y}(j)^{ \VEC{M}_1(j) } \right]=\sum\limits_{k=\VEC{a}(i)}^{\infty}\sum\limits_{\boldsymbol{\ell} \in \bar{\N}_0^i}\mathbb{P}_{\VEC{a}(i)\VEC{e}_i}(\VEC{T}_0(i)=k,\VEC{M}_1=\boldsymbol{\ell})x^{k}\prod\limits_{j\neq i}\VEC{y}(j)^{\boldsymbol{\ell}(j)},
\]
for each $\boldsymbol{\ell} \in \bar{\N}_0^i$. 
From this, we conclude that 
\[
	\mathbb{P}_{\VEC{a}(i)\VEC{e}_i}(\VEC{T}_0(i)=k,\VEC{M}_1= \boldsymbol{\ell}) = \frac{\VEC{a}(i)}{k}\boldsymbol{\mu}_i^{*k}(\boldsymbol{\ell} + (k-\VEC{a}(i))\VEC{e}_i).
\]

We now compute the moment generating function of $(\VEC{T}_0, \VEC{M}_1)$. 
By the branching property, if $\VEC{Y}_0=\VEC{a}$, then for every $(\VEC{x},\VEC{y}) \in [0,1]^d \times [0,1]^d$, we have
\begin{align} \label{eq: varphi_a}
    \varphi_{\VEC{a}}( \VEC{x}, \VEC{y})
        &\coloneqq \mathbb{E}_{\VEC{a}}\left[ \prod_{j=1}^d \VEC{x}(j)^{ \VEC{T}_0(j) }  \VEC{y}(j)^{ \VEC{M}_1(j) } \right]\\ 
	&= \prod_{i=1}^{d} \prod_{r=1}^{\VEC{a}(i)}  \mathbb{E}_{\boldsymbol{e}_i}\left[ \VEC{x}(i)^{ \VEC{T}_{0,r}(i) } \prod_{j\neq i}  \VEC{y}(j)^{ \VEC{M}_{1,r}(j) } \right] \nonumber \\ 
	&=\prod_{i=1}^{d}   \left(\mathbb{E}_{\boldsymbol{e}_i}\left[ \VEC{x}(i)^{ \VEC{T}_{0}(i) } \prod_{j\neq i}   \VEC{y}(j)^{ \VEC{M}_{1}(j) } \right]\right)^{\VEC{a}(i)} \\
	&=\prod_{i=1}^d \varphi_{i}^{\VEC{a}(i)}( \VEC{x}(i), {\VEC{\bar y}}^i).
\end{align}
Here, for each $i \in [d]$, the families $\left(\VEC{T}_{0,r}(i), r \in [\VEC a(i)]\right)$ and $\left( ( \VEC{M}_{1,r}(j), j \in [d] ), r \in [ \VEC{a}(i) ]\right)$  are i.i.d. copies of $\VEC{T}_0(i)$ and $\left(\VEC{M}_{1}(j), j \in [d]\right)$, respectively. Hence, we can write:
\[
	\varphi_{\VEC{a}}( \VEC{x}, \VEC{y}) = \boldsymbol{\varphi}^{\VEC{a}}( \VEC{x}, \VEC{y}).
\]
Moreover, from~\eqref{eq:pgf TiMexpl} and \eqref{eq: varphi_a}, for any $\VEC{a} \in \mathbb{N}_0^d$ and  $\VEC{x}, \VEC{y}\in [0,1]^d$, we get
\begin{align*}	
    \varphi_{\VEC{a}}(\VEC{x}, \VEC{y}) &=\prod_{i=1}^d \varphi_{i}^{\VEC{a}(i)}( \VEC{x}(i), \VEC{\bar{y}})\\ 
    &= \prod_{i=1}^d \left[\sum\limits_{\VEC{k}(i) 
    \VEC{a}(i)}^{\infty} \frac{\VEC{a}(i)}{\VEC{k}(i)}\sum\limits_{\VEC{v}_i \in \N_0^d}\boldsymbol{\mu}_i^{*\VEC{k}(i)}\big(\VEC{v}_i + (\VEC{k}(i)-\VEC{a}(i))\VEC{e}_i\big) \VEC{x}(i)^{\VEC{k}(i)}\prod\limits_{j\neq i} 
    \VEC{y}(j)^{\VEC{v}_i(j)}\right].
\end{align*}

The last equality can be rewritten as
\begin{equation*}
	\begin{split}
		\varphi_{\VEC{a}}(\VEC{x}, \VEC{y})
		&= \sum\limits_{\VEC{k}- \VEC{a} \in \mathbb{N}_0^d}\prod_{i=1}^d \left[ \frac{\VEC{a}(i)}{\VEC{k}(i)}\VEC{x}(i)^{\VEC k(i)}\sum\limits_{\VEC{v}_i \in \mathbb{N}_0^d}\boldsymbol{\mu}_i^{*\VEC{k}(i)}(\VEC{v}_i + (\VEC{k}(i)- \VEC{a}(i) \VEC{e}_i)\prod\limits_{j\neq i} \VEC{y}(j)^{\VEC{v}_i(j) }\right]\\
		&= \sum\limits_{\VEC{k}- \VEC{a} \in \mathbb{N}_0^d}\sum\limits_{\VEC{v}_i \in \mathbb{N}_0^d}\left[\prod\limits_{i=1}^d \VEC{x}(i)^{\VEC{k}(i)}\VEC{y}(j)^{\VEC{v}_i}\right]\left[\prod_{i=1}^d \frac{\VEC{a}(i)}{\VEC{k}(i)} \sum\limits_{(\VEC{v}_1,\dots,\VEC{v}_d) \in W_{\boldsymbol{\ell}}} \boldsymbol{\mu}_i^{*\VEC{k}(i)}(\VEC{v}_i + (\VEC{k}(i)-\VEC{a}(i))\VEC{e}_i) \right].
	\end{split}
\end{equation*}
On the other hand, using again the definition, we also have
\[
	\varphi_{\VEC{a}}(\VEC{x}, \VEC{y})= \sum\limits_{\VEC{k}-\VEC{a}\in \mathbb{N}_0^d}\sum\limits_{\boldsymbol{\ell} \in \mathbb{N}_0^d} \mathbb{P}_{\VEC{a}}\left(\VEC{T}_0=\VEC{k}, \VEC{M}_1 = \boldsymbol{\ell}\right)\prod\limits_{i=1}^d \VEC{x}(i)^{\VEC{k}(i)}\VEC{y}(i)^{\boldsymbol{\ell}(i) }.
\]
Comparing the two expressions, 
we deduce that for all $\VEC{k} - \VEC{a}, \boldsymbol{\ell} \in \mathbb{N}_0^d$ we have
\[
\mathbb{P}_{\VEC{a}}\left(\VEC{T}_0=\VEC{k}, \VEC{M}_1 = \boldsymbol{\ell}\right) = \prod_{i=1}^d \frac{\VEC{a}(i)}{\VEC{k}(i)} \sum\limits_{ (\VEC{v}_1,\dots,\VEC{v}_d) \in  W_{\boldsymbol{\ell}}} \boldsymbol{\mu}_i^{*\VEC{k}(i) }(\VEC{v}_i + (\VEC{k}(i)-\VEC{a}(i))\VEC{e}_i).
\]
\hfill \qedsymbol

\subsection{Proof Corollary~\ref{cor:ext}} \label{proof:ext}
If we differentiate~\eqref{eq: pgfTM_i} with respect to $x$, we obtain
\[
	\frac{\partial\varphi_i}{\partial x}(x,\VEC{y}^i)=g_i\left(\varphi_i(x,\VEC{y}^i),\VEC{y}^i\right) + x\frac{\partial g_i}{\partial s_i}(\varphi_i(x,\VEC{y}^i),\VEC{y}^i)\frac{\partial\varphi_i}{\partial x}(x,\VEC{y}^i). 
\]
Recalling that
\[
	\mathbb{E}_{\VEC{e}_i}\left[\VEC{T}_0(i)\right]=\frac{\partial\varphi_i}{\partial x}(x,\VEC{y}^i)\Big|_{x=1, \VEC{y}=\VEC{1}}, \qquad \text{and} \qquad m_{ii}=\mathbb{E}_{\VEC{e}_i}\left[\VEC{Y}_1(j)\right]=\frac{\partial g_i}{\partial s_i}(s_i,\VEC{s}^i)\Big|_{\VEC{s}=\VEC{1}},
\]
we obtain the expectation of $\VEC{T}_0(i)$.	
In general, for any $\VEC{a} \in \mathbb{N}^d_0$ with $\VEC{a}(i) > 0$, we have
\[
	\frac{\partial\varphi_\VEC{a}}{\partial x_i}(\VEC{x},\VEC{y})=\left[\prod\limits_{j \neq i}\varphi^{a_j}_j(x_j,\VEC{y}^j)\right]a_i \varphi^{a_i -1}(x_i, \VEC{y}^i)\frac{\partial\varphi_i}{\partial x_i}(x_i,\VEC{y}^i),
\]
which implies 
\[
	\mathbb{E}_{\VEC{a}}\left[\VEC{T}_0(i)\right]=\frac{a_i}{1-m_{ii}}.
\]
	
On the other hand, for each $j \neq i$,
\begin{equation}\label{eq: partialy_j}
		\frac{\partial\varphi_i}{\partial y_j}(x,\VEC{y}^i)= x\frac{\partial g_i}{\partial s_i}(\varphi_i(x,\VEC{y}^i),\VEC{y}^i)\frac{\partial\varphi_i}{\partial y_j}(x,\VEC{y}^i) + x\frac{\partial g_i}{\partial y_j}(\varphi_i(x,\VEC{y}^i),\VEC{y}^i).
\end{equation}
Since
\[
	m_{ij}=\mathbb{E}_{\VEC{e}_i}\left[\VEC{Y}_1(j)\right]=\frac{\partial g_i}{\partial s_j}(s_i,\VEC{s}^i)\Big|_{\VEC{s}=\VEC{1}},
\]
it follows that 
\[
	\mathbb{E}_{\VEC{e}_i}\left[\VEC{M}_1(j)\right]=m_{ii}\mathbb{E}_{\VEC{e}_i}\left[\VEC{M}_1 (j) \right] + m_{ij} \qquad \Longrightarrow  \qquad \mathbb{E}_{\VEC{e}_i}\left[\VEC{M}_1(j)\right]=\frac{m_{ij}}{1-m_{ii}}.
\]
Also, for any $\VEC{a} \in \mathbb{N}^d$ 
\[
	\frac{\partial\varphi_\VEC{a}}{\partial y_i}(\VEC{x},\VEC{y})=\varphi^{a_i}_i(x_i,\VEC{y}^i)\sum\limits_{j\neq i}\left[\prod\limits_{k \neq i,j}\varphi^{a_k}_j(x_k,\VEC{y}^k)\right]a_j \varphi^{a_j -1}(x_j, \VEC{y}^j)\frac{\partial\varphi_j}{\partial y_i}(x_j,\VEC{y}^j) ,
\]
which implies
\[
	\mathbb{E}_{\VEC{a}}\left[\VEC{M}_1(j)\right]
	=\sum\limits_{j\neq i}\frac{a_j m_{ji}}{1-m_{jj}}=\sum\limits_{i\neq j}a_i\mathbb{E}_{\VEC{e}_i}\left[\VEC{M}_1(j)\right].
\]

To compute the second moment, observe that
\[
	\mathbb{E}_{\VEC{e}_i}\left[|\VEC{M}_1|^2\right]= \sum\limits_{j,k \neq i}\mathbb{E}_{\VEC{e}_i}\left[\VEC{M}_1(j)\VEC{M}_1(k)\right] ,
\]		
and from~\eqref{eq: partialy_j}, we see that for all $j,k \in [d]$ with $j,k \neq i$
\begin{equation*}
		\begin{split}
			\frac{\partial^2\varphi_i}{\partial y_j\partial y_k}(x,\VEC{y}^i)= x&\left[\frac{\partial^2 g_i}{\partial s_i^2}(\varphi_i(x,\VEC{y}^i),\VEC{y}^i)\frac{\partial\varphi_i}{\partial y_k}(x,\VEC{y}^i) + \frac{\partial^2 g_i}{\partial s_i\partial y_k}(\varphi_i(x,\VEC{y}^i),\VEC{y}^i)\right]\frac{\partial\varphi_i}{\partial y_j}(x,\VEC{y}^i)\\
			& + x\frac{\partial g_i}{\partial s_i}(\varphi_i(x,\VEC{y}^i),\VEC{y}^i)\frac{\partial^2\varphi_i}{\partial y_jy_k}(x,\VEC{y}^i)\\
			&  + x\left[\frac{\partial^2 g_i}{\partial y_j \partial s_i}(\varphi_i(x,\VEC{y}^i),\VEC{y}^i)\frac{\partial\varphi_i}{\partial y_k}(x,\VEC{y}^i) + \frac{\partial^2 g_i}{\partial y_jy_k}(\varphi_i(x,\VEC{y}^i),\VEC{y}^i)\right].
		\end{split}
\end{equation*}
So, as $\mathbb{E}_{\VEC{e}_i}\left[\VEC{M}_1(j)\VEC{M}_1(k)\right]=\frac{\partial^2\varphi_i}{\partial y_j\partial y_k}(x,\VEC{y}^i)\Big|_{x=1,\VEC{y}=\VEC{1}}$ we deduce
\begin{equation*} 
		\begin{split}
			\mathbb{E}_{\VEC{e}_i}\left[\VEC{M}_1(j)\VEC{M}_1(k)\right]=
			&\mathbb{E}\left[\left(\boldsymbol{\xi}^{i}(i)\right)^2\right]
			 \mathbb{E}_{\VEC{e}_i}\left[\VEC{M}_1(j)\right]\mathbb{E}_{\VEC{e}_i}\left[\VEC{M}_1(k)\right] 
			+ \mathbb{E}\left[\boldsymbol{\xi}^{i}(i)\boldsymbol{\xi}^{i}(k)\right]\mathbb{E}_{\VEC{e}_i}\left[\VEC{M}_1(j)\right]\\
			& + m_{ii} \mathbb{E}_{\VEC{e}_i}\left[\VEC{M}_1(j)\VEC{M}_1(k)\right] + \mathbb{E}\left[\boldsymbol{\xi}^{i}(i)\boldsymbol{\xi}^{i}(j)\right]\mathbb{E}_{\VEC{e}_i}\left[\VEC{M}_1(k)\right]
			+ \mathbb{E}\left[\boldsymbol{\xi}^{i}(j)\boldsymbol{\xi}^{i}(k)\right] 	\\[3pt]
			= & \frac{m_{ij}m_{ik}}{(1-m_{ii})^3}\mathbb{E}\left[\left(\boldsymbol{\xi}^{i}(i)\right)^2 \right]
			+ \frac{1}{\left(1-m_{ii}\right)^2}\mathbb{E}\left[\boldsymbol{\xi}^{i}(i)\left(m_{ij}\boldsymbol{\xi}^{i}(k) + m_{ik}\boldsymbol{\xi}^{i}(j)\right)\right]\\ &+\frac{1}{1 - m_{ii}}\mathbb{E}\left[\boldsymbol{\xi}^{i}(j)\boldsymbol{\xi}^{i}(k)\right],
		\end{split}
\end{equation*}
which leads to the identity
\begin{equation*}
		\begin{split}
			& \mathbb{E}_{\VEC{e}_i}\left[|\VEC{M}_1|^2\right] = \\ &\frac{1}{1-m_{ii}}
			\left\{\mathbb{E}_{\VEC{e}_i}\left[|\VEC{M}_1|\right]^2
			\mathbb{E}\left[\left(\boldsymbol{\xi}^{i}(i)\right)^2 \right] 
			+ 2\mathbb{E}_{\VEC{e}_i}\left[|\VEC{M}_1|\right] \left(\sum\limits_{j \neq i} \mathbb{E}\left[\boldsymbol{\xi}^{i}(i)\boldsymbol{\xi}^{i}(k)\right]\right) + \mathbb{E}\left[\left(\sum\limits_{j\neq i}\boldsymbol{\xi}^{i}(j)\right)^2 \right]\right\} .
		\end{split}
\end{equation*}
From this last identity, we obtain
\[
	\mathbb{E}_{\VEC{e}_i}\left[|\VEC{M}_1|^2\right] = \frac{1}{1-m_{ii}}\mathbb{E}\left[\left(\mathbb{E}_{\VEC{e}_i}\left[|\VEC{M}_1|\right]\xi^{(c,i)} + \xi^{(m,i)}\right)^2\right],
\]
and this concludes the proof. \hfill \qedsymbol

\subsection{Proof Lemma \ref{lem: simL3Ber}} \label{proof: simL3Ber}

For $x\in [0,1]$, $\VEC{\bar{y}}^i\in [0,1]^d$, and $i,j\in[d]$, the moment generating function of the pair $(\boldsymbol{\tau}_0(i), \VEC{X}_0^i(j))$, corresponding to a BGW starting with a single individual of type $i$, is given by
\begin{align*}
	\widetilde{\varphi}_{i}( x, \VEC{\bar{y}}^i) 
	& \coloneqq  \mathbb{E}_{\boldsymbol{e}_i}\left[  x^{ \boldsymbol{\tau}_0(i) } \prod_{j\neq i}  \VEC{y}(j)^{ \VEC{X}_0^i(j) } \right] \\
	&=\sum\limits_{k=0}^\infty \P( \VEC{S}_1^{(i)}(i)=k ) \mathbb{E}_{\boldsymbol{e}_i}\left[  x^{ \boldsymbol{\tau}_0(i) } \prod_{j\neq i}  \VEC{y}(j)^{ \VEC{X}_0^i(j) }  \Big| \VEC{S}_1^{(i)}(i)=k  \right] \\
	& = \sum\limits_{k=0}^\infty \sum\limits_{\boldsymbol{\ell} \in \bar{\N}_0^i}   \mathbb{P}_{\VEC{e}_i}\left[ ( \boldsymbol{\xi}^{i}(1) ,\dots,\boldsymbol{\xi}^{i}(d))=(\boldsymbol{\ell}(1),\dots,\boldsymbol{\ell}(i-1), k,\boldsymbol{\ell}(i+1) \dots, \boldsymbol{\ell}(d))\right] \\
	& \qquad \qquad\qquad \times \mathbb{E}_{k \boldsymbol{e}_i}\left[  x^{ \boldsymbol{\tau}_0(i) +1 } \prod_{j\neq i}  \VEC{y}(j)^{ \VEC{X}_0^i(j) + \boldsymbol{\ell}(j)}   \right] \\
	& = x \sum\limits_{k=0}^\infty \sum\limits_{\boldsymbol{\ell} \in \bar{\N}_0^i} \prod_{j\neq i}  \VEC{y}(j)^{  \boldsymbol{\ell}(j)}  \mathbb{P}_{\VEC{e}_i}\left[ ( \boldsymbol{\xi}^{i}(1) ,\dots,\boldsymbol{\xi}^{i}(d))=(\boldsymbol{\ell}(1),\dots,\boldsymbol{\ell}(i-1), k,\boldsymbol{\ell}(i+1) \dots, \boldsymbol{\ell}(d))\right] \\
	& \qquad \qquad\qquad \times \mathbb{E}_{k \boldsymbol{e}_i}\left[  x^{ \boldsymbol{\tau}_0(i) +1 } \prod_{j\neq i}  \VEC{y}(j)^{ \VEC{X}_0^i(j) }   \right] \\
	& = x \sum\limits_{k=0}^\infty \sum\limits_{\boldsymbol{\ell} \in \bar{\N}_0^i}  \prod_{j\neq i}  \VEC{y}(j)^{  \boldsymbol{\ell}(j)}  \mathbb{P}_{\VEC{e}_i}\left[ ( \boldsymbol{\xi}^{i}(1) ,\dots,\boldsymbol{\xi}^{i}(d))=(\boldsymbol{\ell}(1),\dots,\boldsymbol{\ell}(i-1), k,\boldsymbol{\ell}(i+1) \dots, \boldsymbol{\ell}(d))\right] \\
	& \qquad \qquad\qquad \times \left( \widetilde{\varphi}_{i}( x, \VEC{\bar{y}}^i) \right)^k \\
	& = x g_i(\widetilde{\varphi}_{i}( x, \VEC{\bar{y}}^i) ,\VEC{\bar{y}}^i) 
\end{align*}

Since $\widetilde{\varphi}$ satisfies the functional equation in Proposition~\ref{Prop: pgf T_0Mvec}, we can conclude  the result by applying the strong Markov property. \hfill \qedsymbol

\subsection{Proof of Lemma \ref{lem: step1rw}}   \label{proof: step1rw}

Given $j \in [d]$, we now consider the $d$-dimensional random walk defined in Section~\ref{subsec: rw},  
\[
	\VEC{S}^{(n, j)}_k=(\VEC{S}^{(n, j)}_k(1),  \dots, \VEC{S}^{(n, j)}_k(d)), \qquad k\in \N_0.
\]
According to~\eqref{eq: drw} and the assumptions, we have
\[
	\VEC{S}^{(n,j)}_k\coloneqq \left[ n + \sum\limits_{\ell=1}^k \left( \boldsymbol{\xi}^{(n,j)}_\ell(j) - 1\right)\right]\VEC{e}_j + \sum\limits_{\ell=1}^k\sum\limits_{i\neq j} \boldsymbol{\xi}^{(n,j)}_\ell(i) \VEC{e}_i , \qquad k\in \N_0,
\] 
where individuals are ordered according to the breadth-first search algorithm.
Note that for every fixed $n\in \N$ and $j\in [d]$, 
\[
	\boldsymbol{\xi}^{(+,n,j)}= \boldsymbol{\xi}^{(n,j)}(j)+ \sum \limits_{i\neq j}\boldsymbol{\xi}^{(n,j)}(i).
\] 
Moreover, conditionally on $\{ \boldsymbol{\xi}^{(+,n,j)} = \ell \}$, the variable  $\boldsymbol{\xi}^{(n,j)}(j)$ follows a Binomial distribution with parameters $\ell$ and $1-r(n)$, 
while for each $i \neq j$, the distribution of  $\boldsymbol{\xi}^{(n,j)}(i)$ is Binomial$(\ell, r(n)/d-1)$.
Therefore, by~\eqref{eq:size}, we obtain:
\[
	\E\left[\boldsymbol{\xi}^{(n,j)}(j)\right]=1-r(n) \qquad \text{and} \qquad 
	\text{Var}\left[\boldsymbol{\xi}^{(n,j)}(j)\right]= (1-r(n)) \left((1-r(n))\sigma^2 + r(n)\right);
\]
and for all $j \neq i$,
\[
	\E\left[\boldsymbol{\xi}^{(n,j)}(i)\right]=\frac{r(n)}{d-1}  \qquad \text{and} \qquad 
	\text{Var}\left[\boldsymbol{\xi}^{(n,j)}(i)\right]= \frac{r(n)}{d-1}\left[\frac{r(n)}{d-1}\sigma^2 + \left(1-\frac{r(n)}{d-1}\right)\right].
\]
Using~\eqref{eq: hip mut}, we now compute
\begin{equation*}
	\begin{split}
\lim_{n\rightarrow \infty }	\E\left[ \frac{1}{n}\VEC{S}^{(n,j)}_{\lfloor n^2t\rfloor}(j) \right]
&=\lim_{n\rightarrow \infty } \left[\frac{n}{n} + \frac{\lfloor n^2t\rfloor}{n} \left(1-r(n)-1\right) \right
]=1-ct, \text{ and}\\
\lim_{n\rightarrow \infty }\text{Var}\left[ \frac{1}{n}\VEC{S}^{(n,j)}_{\lfloor n^2t\rfloor}(j) \right] 
&=\lim_{n\rightarrow \infty }\frac{\lfloor n^2t\rfloor}{n^2} \left[(1-r(n)) \left((1-r(n))\sigma^2 + r(n)\right)\right]= \sigma^2t. 
	\end{split}
\end{equation*}

On the other hand, for each $j\neq i$,
\begin{equation*}
	\begin{split}
\lim_{n\rightarrow \infty }  \E\left[ \frac{1}{n}\VEC{S}^{(n,j)}_{\lfloor n^2t\rfloor}(i)  \right]
&=\lim_{n\rightarrow \infty }\frac{\lfloor n^2t\rfloor}{n}  \frac{r(n)}{d-1} =\frac{ct}{d-1}  , \\
\lim_{n\rightarrow \infty }   \text{Var}\left[ \frac{1}{n}\VEC{S}^{(n,j)}_{\lfloor n^2t\rfloor}(i)\right]
&=\lim_{n\rightarrow \infty } \frac{\lfloor n^2t\rfloor}{n^2} \frac{r(n)}{d-1}\left[\frac{r(n)}{d-1}\sigma^2 + \left(1-\frac{r(n)}{d-1}\right)\right]  =0.
	\end{split}
\end{equation*}

Putting everything together, we have shown that for every  $i \in [d]$, the following convergence holds a.s.
\begin{equation}\label{eq: convRW}
  \left(\frac{1}{n}\VEC{S}^{(n,j)}_{\lfloor n^2t\rfloor}, t\geq 0\right) 
	 \Longrightarrow \left(\VEC{Y}^j_t, t\geq 0\right), 
\end{equation}
where
\begin{equation*}
\VEC{Y}^j_t \coloneqq \left(1 + \sigma \VEC{B}_t(j) -ct\right)\VEC{e}_j +   \sum\limits_{i\neq j}\frac{ct}{d-1}\VEC{e}_i,
\end{equation*}
and $(\VEC{B}_t(j), t\geq 0)$ is a standard Brownian motion.  
Since $(\boldsymbol{\xi}^{(+,i)}, i\in[d])$ is a sequence of independent random variables, we have that for $i\neq j$, $(\VEC{B}_t(i), t\geq 0)$ is independent of $(\VEC{B}_t(j), t\geq 0)$.

Define, as in~\eqref{eq:tau}, 
\[
	\boldsymbol{\tau}^{(n)}_0(j) \coloneq\inf\{k \in \Z_+ : \VEC{S}^{(n,j)}_k=0\}	.
\]
Then, by the convergence above, 
\begin{equation*}
	n^{-2}\boldsymbol{\tau}^{(n)}_0(j) = \inf\{u \in n^{-2}\Z_+ : \boldsymbol{S}^{(n, j)}_{un^{2}}(j)=0\}=\inf\{u \in n^{-2}\Z_+ : n^{-1}\boldsymbol{S}^{(n, j)}_{un^{2}}(j)=0\}  \rightarrow \boldsymbol{\theta}_1(j)  
\end{equation*}
as $n \to \infty$, where
\[
	\boldsymbol{\theta}_1(j) =\inf\{u \in \R_+ :  1+ \sigma \VEC{B}_u(j) -cu =0\}
	=\inf\{u \in \R_+ : \sigma (-\VEC{B}_u(j)) +cu =1\}.
\]
The variable $\boldsymbol{\theta}_1(j) $ is the first-passage time of a Brownian motion with drift, 
so it is distributed as an inverse Gaussian with parameters $(\tfrac{1}{c}, \tfrac{1}{\sigma^2})$. 
By Lemma~\ref{lem: simL3Ber}, we have the identity in distribution
\[
\left(\boldsymbol{\tau}^{(n)}_0(j), \left\{\sum\limits_{j\neq i} \VEC{S}^{j,(n)}_{\tau_0^{(j),n}}(i) \right\}_{j=1}^d\right) \overset{\mathcal{D}}{=} (\VEC{T}^{(n)}_0, \VEC{M}^{(n)}_1)
\]
 for each $n \in \N$. Therefore, using~\eqref{eq: convRW},  we conclude that 
\begin{equation*}
	(n^{-2}\VEC{T}^{(n)}_0, n^{-1}\VEC{M}^{(n)}_1) \quad \underset{n\rightarrow \infty}{\Longrightarrow}\quad 
	\left(\boldsymbol{\theta}_1(j), \sum\limits_{i\neq j}\tfrac{c}{d-1}\boldsymbol{\theta}_1(j)\VEC{e}_i\right).
\end{equation*}
Finally,  by  the L\'evy convergence theorem, convergence in distribution implies convergence of moment generating functions, so
\begin{equation*}
	\begin{split}
	\lim\limits_{n \rightarrow \infty} \E^{r(n)}_{n\VEC{e}_j}\left[e^{ \VEC{q}(j)n^{-2}T_0} e^{\langle \VEC{q},n^{-1}\VEC{M}_1 \rangle}\right]
	& = \E\left[\exp\left\{\VEC{q}(j)\boldsymbol{\theta}_1(j) + \sum\limits_{i\neq j} \VEC{q}(i)\tfrac{c}{d-1}\boldsymbol{\theta}_1(j)\right\}\right]\\
	&=\E\left[\exp\left\{\left(\VEC{q}(j) + \sum\limits_{i\neq j} \VEC{q}(i)\tfrac{c}{d-1}\right)\boldsymbol{\theta}_1(j)\right\}\right],	
	\end{split}
\end{equation*}
and the result
follows by the moment generating function of the inverse Gaussian distribution with parameters
$\left(\tfrac{1}{c},\tfrac{1}{\sigma^2}\right)$.  \hfill \qedsymbol

\subsection{Proof of Proposition \ref{prop: stepkrw}} \label{proof: stepkrw}
We now consider the $d$-dimensional random walks 
\[
	\VEC{S}^{(n, i)}_k\coloneqq \left[ \VEC{a}(i) + \sum\limits_{\ell=1}^k \left( \boldsymbol{\xi}^{(n, i)}_\ell(i) - 1\right)\right]\VEC{e}_i + \sum\limits_{\ell=1}^k\sum\limits_{j\neq i} \boldsymbol{\xi}^{(n, i)}_\ell(j) \VEC{e}_j , \qquad k\in \N_0, \quad i \in [d], 
\]
where individuals are ranked according with the breadth-first search algorithm.

As in the previous proof, we have
\begin{equation}\label{eq: convRW2}
  \left(\frac{1}{n}\VEC{S}^{(n,i)}_{\lfloor n^2t\rfloor}, t\geq 0\right) 
	 \Longrightarrow \left(\VEC{Y}^i_t, t\geq 0\right), \qquad i\in[d],
\end{equation}
where 
\[
\VEC{Y}^i_t = \left(\VEC{y}(i) + \sigma\VEC{B}_t(i) - ct\right)\VEC{e}_i + \tfrac{ct}{d-1}, \qquad i \in [d].
\]
Since $(\boldsymbol{\xi}^{(+)}(i), i\in[d])$ is a sequence of independent random variables, 
it follows that for $i\neq j$,  $(\VEC{B}_t(i), t\geq 0)$ and  $(\VEC{B}_t(j), t\geq 0)$ are independent.
 
For every $n\in\N$, define
\[
	\boldsymbol{\tau}_{0}^{(n)}(i) \coloneqq\inf \{ k\in\N_0: \VEC{S}_{k}^{(n,i)}(i)=0\}.	
\]
Using the same argument as before, we obtain
\begin{equation*}
	n^{-2}\boldsymbol{\tau}^{(n)}_0 \overset{\mathcal{D}}{\Longrightarrow} \boldsymbol{\theta}_{\VEC{y}},
\end{equation*}
where
\[
\boldsymbol{\theta}_{\VEC{y}}(i) =\inf\{u \in \R_+ :  \VEC{y}(i) + \sigma \VEC{B}_u(i) - cu =0\}
=\inf\{u \in \R_+ : \sigma (-\VEC{B}_u(i)) +cu =\VEC{y}(i)\}.
\]
For each $i \in [d]$, the random variable $\boldsymbol{\theta}_{\VEC{y}}(i) $ is the first-passage time of a Brownian motion with drift, so it has an inverse Gaussian distribution with parameters $(\tfrac{\VEC{y}(i)}{c}, \tfrac{\VEC{y}(i)^2}{\sigma^2})$ .

Again, by using Lemma \ref{lem: simL3Ber}, we deduce that
\begin{equation*}
	\left(n^{-2}\VEC{T}_0^{(n)} , n^{-1}\VEC{M}_1^{(n)}\right)   
	\overset{\mathcal{D}}{\Longrightarrow}
	\left(\boldsymbol{\theta}_{\VEC{y}}, \boldsymbol{\Sigma}\right),
\end{equation*}
where 
\[
	\boldsymbol{\Sigma}(i)=\sum\limits_{j\neq i} \tfrac{c}{d-1}\boldsymbol{\theta}_{\VEC{y}}(j),	
\]
which is an inverse Gaussian with mean $\sum_{i\neq j} \tfrac{\VEC{y}(i)}{d-1}$ and shape parameter
$\sum_{i\neq j} \frac{c \VEC{y}(i)^2}{(d-1)\sigma^2}$. 
Therefore, 
\begin{equation*}
	\begin{split}
		\lim\limits_{n \rightarrow \infty} \E^{r(n)}_{\VEC{a}^{(n)}}\left[e^{\langle\VEC{x},n^{-2}\VEC{T}_0\rangle} e^{\langle \VEC{z},n^{-1}\VEC{M}_1 \rangle}\right]&= \E\left[e^{\langle\VEC{x},\boldsymbol{\theta}_{\VEC{y}}\rangle} e^{\langle \VEC{z},\boldsymbol{\Sigma} \rangle}\right]	\\
        &=\E\left[\exp\left\{\sum\limits_{j=1}^d \VEC{x}(j)\boldsymbol{\theta}_{\VEC{y}}(j) + \sum\limits_{k=1}^d \VEC{z}(k)\tfrac{c}{d-1}\sum\limits_{i \neq k}\boldsymbol{\theta}_{\VEC{y}}(i)\right\}\right]\\
		&= \E\left[\exp\left\{\sum\limits_{j=1}^d \VEC{x}(j)\boldsymbol{\theta}_{\VEC{y}}(j) + \sum\limits_{j=1}^d \boldsymbol{\theta}_{\VEC{y}}(j) \left(\sum\limits_{k \neq j}\VEC{z}(k)\tfrac{c}{d-1}\right)\right\}\right]\\
		&= \prod\limits_{j=1}^d \E\left[\exp\left\{\left(\VEC{x}(j) + \tfrac{c}{d-1}\sum\limits_{i\neq j}\VEC{z}(i)\right)\boldsymbol{\theta}_{\VEC{y}}(j)\right\}\right],
	\end{split}
\end{equation*}
and the result follows from the moment generating function of the inverse Gaussian  $\boldsymbol{\theta}_{\VEC{y}}(j) \sim \inverseGauss \left(\tfrac{\VEC{y}(j)}{c},\tfrac{\VEC{y}(j)^2}{\sigma^2}\right)$.

Finally, by Lemma \ref{le:TyMMarkov}, the sequence
$\left\{(\VEC{T}_k,\VEC{M}_{k+1}): k \in \N_0\right\}$ defines a time-homogeneous Markov process.
Therefore, for each $n \in \mathbb{N}$,
\begin{equation*}
    \begin{split}
        \E^{r(n)}_{\VEC{a}^{(n)}}\left[e^{\langle\VEC{x},n^{-2}\VEC{T}_k\rangle} e^{\langle \VEC{z},n^{-1}\VEC{M}_{k+1} \rangle} \Big| \VEC{T}_{k-1}=\VEC{v}^{(n)}, \VEC{M}_{k}=\VEC{u}^{(n)}\right]= 
        \E^{r(n)}_{\VEC{v}^{(n)}}\left[e^{\langle\VEC{x},n^{-2}\VEC{T}_0\rangle} e^{\langle \VEC{z},n^{-1}\VEC{M}_1 \rangle}\right],
    \end{split}
\end{equation*}
and if $\tfrac{\VEC{v}^{n}}{n} \rightarrow \VEC{v}$, we conclude the desired convergence stated in~\eqref{eq:Prop2Bertoin10}.  \hfill \qedsymbol

\subsection{Proof of Theorem~\ref{thm:main} }

This proof follows from Lemma~\ref{lem: step1rw}, using arguments similar to those in~\cite[Theorem 1]{Bertoin10}.

We begin with  $n e_j$ individuals, collectively represented  as the root $\varnothing$ of the allele tree, and define
\[
	\left( \mathcal{A}_{\varnothing}, \mathcal{C}_{\varnothing} ,
	\mathbf{d}_{\varnothing} \right).
\]
Note that $\mathcal{A}_{\varnothing} = T^{n\VEC{e}_j}_0$ and $\mathbf{d}_{\varnothing} = \mathbf{M}_1^{ne_j}$.
From Lemma~\ref{lem: step1rw}, under the measure $ \mathbb{P}_{n \mathbf{e}_j}^{r(n)} $, we have
\[
 \left( n^{-2}\mathcal{A}_{\varnothing} = T_0^{n\VEC{e}_j} , \mathcal{C}_{\varnothing}, n^{-1} d_{\varnothing}  \right)
 \Rightarrow 
 \left(\theta_{1}, \sum\limits_{i\in [d]}\tfrac{c}{d-1}\theta_{1}\VEC{e}_{i}\mathbbm{1}_{\{i \neq j\}}\right).
\]
Following the strategy in the proof of~\cite[Theorem 1]{Bertoin10}, Lemma~\ref{lem: step1rw} also implies that  under $ \mathbb{P}_{n \mathbf{e}_j}^{r(n)} $,
then the sequence of atoms of the process ordered as in Section~\ref{sec:alleleTree}
\begin{equation*}
\left( 
n^{-2}T_0,n^{-1}\VEC{M}_{1} \right)  
\end{equation*}
converges as $n \to \infty$ to 
\[
\left(\left(  a_1\mathbf{e}_j, \sum_{i \neq j} \frac{c}{d-1} a_1 e_i \right), 
\left(  a_2\mathbf{e}_j, \sum_{i \neq j} \frac{c}{d-1} a_2 e_i \right), \ldots \right),
\]
where $(a_1, a_2  \ldots)$ denotes the sequence of atoms sizes, ranked in decreasing order, of a Poisson measure on $[0, \infty)$ with intensity $bc^{-1} \nu $, and $\nu$ is the measure defined in~\eqref{eq:measureNu}.

In the limit, the order by size of the clone descendants\footnote{The order by clone  descendants is defined above~\cite[Lemma 2]{Bertoin10} and considered in the proof  of~\cite[Theorem 1]{Bertoin10}.}
is equivalent to the order defined in the construction of the multitype allele tree in Section~\ref{sec:alleleTree}. 

Let $\mathbb{Q}_x$ be the law of the tree-indexed CSBP with initial population distributed as $\theta_1 \sim \inverseGauss \left( \frac{1}{c} , \frac{1}{\sigma^2} \right)$ and reproduction measure $\nu$. 
Then, from Lemma~\ref{lem: step1rw}, we deduce that under $ \mathbb{P}_{n \mathbf{e}_j}^{r(n)} $ 
\[
	\left( \left( 
	n^{-2} \mathcal{A}_u, \mathcal{C}_u, n^{-1} \mathbf{d}_u
	\right) : \vert u \vert \leq 1 \right) 
	\Rightarrow  \left(\left(\mathcal{Y}_u,\mathcal{C}_u,\sum\limits_{i\neq \mathcal{C}_u}\frac{c}{d-1}\mathcal{Y}_u\VEC{e_i}\right): \vert u \vert \leq 1 \right), 
\]
as $n \to \infty$,
in the sense of finite-dimensional distributions. 

By the Markov property, this convergence extends inductively to all levels of the allele tree. Therefore, we obtain the desired convergence for the full tree-indexed process.  \hfill \qedsymbol

\section*{Acknowledgements}
MCF's research is supported by the UNAM-PAPIIT grant IN109924. SHT acknowledges the support of the UNAM-PAPIIT grant IA103724.

The authors thank the research network \href{https://sites.google.com/view/aleatorias-normales/home}{Aleatorias \& Normales}, which connects female Latin American researchers in probability and statistics. The \emph{Aleatorias \& Normales Seminar} served as the starting point for this project.

\bibliography{biblio.bib}
\bibliographystyle{plain}

\end{document}